\providecommand{\U}[1]{\protect\rule{.1in}{.1in}}
\newtheorem{theorem}{Theorem}
\newtheorem{corollary}[theorem]{Corollary}
\newtheorem{lemma}[theorem]{Lemma}
\newtheorem{proposition}[theorem]{Proposition}
\newenvironment{proof}[1][Proof]{\noindent\textbf{#1.} }{\ \rule{0.5em}{0.5em}}
\begin{document}

\title{Global minimizers of the Allen-Cahn equation in dimension $n\geq8$}
\author{Yong Liu\\School of Mathematics and Physics, \\North China Electric Power University, Beijing, China,\\Email: liuyong@ncepu.edu.cn
\and Kelei Wang\\School of Mathematics and Statistics, Wuhan Univeristy\\Email:wangkelei@whu.edu.cn
\and Juncheng Wei\\Department of Mathematics, \\University of British Columbia, Vancouver, B.C., Canada, V6T 1Z2\\Email: jcwei@math.ubc.ca}
\maketitle

\section{Introduction and main results}

The bounded entire solutions of the Allen-Cahn equation
\begin{equation}
-\Delta u=u-u^{3}\text{ \ in }\mathbb{R}^{n},\text{ }\left\vert u\right\vert
<1, \label{AC}%
\end{equation}
has attracted a lot of attentions in recent years, partly due to its intricate
connection to the minimal surface theory. For $n=1,$ $\left(  \ref{AC}\right)
$ has a heteroclinic solution $H\left(  x\right)  =\tanh\left(  \frac{x}%
{\sqrt{2}}\right)  .$ Up to a translation, this is the unique monotone
increasing solution in $\mathbb{R}.$ De Giorgi (\cite{DeG}) conjectured that
for $n\leq8,$ if a solution to $\left(  \ref{AC}\right)  $ is monotone in one
direction, then up to translation and rotation it must be one dimensional and
hence equals $H$ in certain coordinate. De Giorgi's conjecture is parallel to
the Bernstein conjecture in minimal surface theory, which states that if
$F:\mathbb{R}^{n}\rightarrow\mathbb{R}$ is a solution to the minimal surface
equation
\[
\operatorname{div}\frac{\nabla F}{\sqrt{1+\left\vert \nabla F\right\vert ^{2}%
}}=0,
\]
then $F$ must be a linear function in its variables. The Bernstein conjecture
has been proved to be true for $n\leq7$. The famous Bombieri-De Giorgi-Giusti
minimal graph (\cite{BDG}) gives a counter-example for $n=8,$ which also
disproves the Bernstein conjecture for all $n\geq8.$ As for the De Giorgi
conjecture, it has been proved to be true for $n=2$ (Ghoussoub-Gui \cite{G}),
$n=3$ (Ambrosio-Cabre \cite{C}), and for $4\leq n\leq8$ (Savin \cite{S}),
under an additional limiting condition
\[
\lim_{x_{n}\rightarrow\pm\infty}u\left(  x^{\prime},x_{n}\right)  =\pm1.
\]
This condition together with the monotone property ensures that $u$ is a
global minimizer in the sense that, for any bounded domain $\Omega
\subset{\mathbb{R}}^{n}$,
\[
J\left(  u\right)  \leq J\left(  u+\phi\right)  ,\text{ for all }\phi\in
C_{0}^{\infty}\left(  \Omega\right) ,
\]
where
\[
J\left(  u\right)  :=\int_{\Omega}\left[  \frac{1}{2}\left\vert \nabla
u\right\vert ^{2}+\frac{\left(  u^{2}-1\right)  ^{2}}{4}\right] ,
\]
see Alberti-Ambrosio-Cabre \cite{AC} and Savin \cite{S}. We also refer to
Farina-Valdinoci\cite{Farina} for discussion on related results.

On the other hand, it turns out that for $n\geq9,$ there indeed exist monotone
solutions which are not one dimensional. These nontrivial examples have been
constructed in $\cite{M2}$ using the machinery of infinite dimensional
Lyapunov-Schmidt reduction. The nodal set of these solutions are actually
close to the Bombieri-De Giorgi-Giusti minimal graph. Indeed, it is also
proved in \cite{M3} that for any nondegenerate minimal surfaces with finite
total curvature in ${\mathbb{R}}^{3}$, one could construct family of solutions
for the Allen-Cahn equation which \textquotedblleft follow\textquotedblright%
\ these minimal surfaces. These results provide us with further indication
that there is a deep relation between the minimal surface theory and the
Allen-Chan equation.

Regarding solutions which are not necessary monotone, in $\cite{S}$, Savin
also proved that if $u$ is a global minimizer and $n\leq7,$ then $u$ is one
dimensional. While the monotone solutions of Del Pino-Kowalczyk-Wei provides
examples of nontrivial global minimizers in dimension $n\geq9,$ it is not
known whether there are nontrivial global minimizers for $n=8.$ Due to the
connection with minimal surface theory, these global minimizers have long been
conjectured to exist in dimension 8 and higher. The existence of these global
minimizers will be our main focus in this paper.

To state our results, let us recall some basic facts from the minimal surface
theory. It is known that in $\mathbb{R}^{8},$ there is a minimal cone with one
singularity at the origin which minimizes the area, called Simons cone. It is
given explicitly by:%
\[
\left\{  x_{1}^{2}+...+x_{4}^{2}=x_{5}^{2}+...+x_{8}^{2}\right\}  .
\]
The minimality of this cone is proved in \cite{BDG}. More generally, if we
consider the so-called Lawson's cone ($2\leq i\leq j$)
\[
C_{i,j}:=\left\{  \left(  x,y\right)  \in\mathbb{R}^{i}\oplus\mathbb{R}%
^{j}:\left\vert x\right\vert ^{2}=\frac{i-1}{j-1}\left\vert y\right\vert
^{2}\right\}  ,
\]
then it has mean curvature zero except at the origin and hence is a minimal
hypersurface with one singularity. For $i+j\leq7,$ the cone is unstable
(Simons \cite{Simons}). Indeed, it is now known that for $i+j\geq8,$ and
$\left(  i,j\right)  \neq\left(  2,6\right)  ,$ $C_{i,j}$ are area minimizing,
and $C_{2,6}$ is not area minimizing but it is one sided minimizer. (See
\cite{Alen}, \cite{De}, \cite{Lin},\ \cite{Lawson}...). Note that the cone
$C_{i,j}$ has the $O\left(  i\right)  \times O\left(  j\right)  $ symmetry,
that is, it is invariant under the natural group actions of $O\left(
i\right)  $ on the first $i$ variables and $O\left(  j\right)  $ on the last
$j$ variables. We also refer to \cite{L} and references therein for more
complete history and details on related subjects.

It turns out there are analogous objects as the cone $C_{i,i}$ in the theory
of Allen-Cahn equation. They are the so-called saddle-shaped solutions, which
are solutions in $\mathbb{R}^{2i}$ of $\left(  \ref{AC}\right)  $ vanishes
exactly on the cone $C_{i,i}$ (Cabre-Terra \cite{C1,C2} and Cabre \cite{C3}).
We denote them by $D_{i,i}.$ It has been proved in \cite{C1} that these
solutions are unique in the class of symmetric functions. Furthermore in
$\cite{C1,C2}$ it is proved that for $2\leq i\leq3,$ the saddle-shaped
solution is unstable, while for $i\geq7,$ they are stable(\cite{C3}). It is
conjectured that for $i\geq4,$ $D_{i,i}$ should be a global minimizer. This
turns out to be a difficult problem. We show however in this paper the following

\begin{theorem}
\label{Main}Suppose either (1) $i+j\geq9$ or (2) $i+j=8$, $\left\vert
i-j\right\vert \leq4.$ Then there is a family of global minimizers of the
Allen-Cahn equation in $\mathbb{R}^{i+j}$ having $O\left(  i\right)  \times
O\left(  j\right)  $ symmetry and are not one dimensional. The zero level set
of these solutions converge to the cone $C_{i,j}$ at infinity.
\end{theorem}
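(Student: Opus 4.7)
The plan is to produce the desired global minimizer as a symmetric solution whose zero set asymptotes to $C_{i,j}$, and then certify its global minimality by trapping it inside a monotone one-parameter family of solutions that serves as a calibration.

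For the existence step I would work in the $O(i)\times O(j)$-symmetric class, which reduces \eqref{AC} to a semilinear equation on the first quadrant $\{(s,t): s=|x|\geq 0,\, t=|y|\geq 0\}$ with weight $s^{i-1}t^{j-1}$. I would minimize the Allen-Cahn energy $J$ in balls $B_R$ within this symmetry class, using boundary data whose sign matches $(j-1)s^{2}-(i-1)t^{2}$ so that the nodal set on $\partial B_R$ already models $C_{i,j}$; then let $R\to\infty$ and extract a limit using elliptic regularity and Modica's monotonicity formula. A blow-down analysis combined with the area-minimizing property of $C_{i,j}$ (or its one-sided minimality in the case $(i,j)=(2,6)$) would show that the nodal set of the limit $u^{\ast}$ converges to $C_{i,j}$ at infinity, rather than to some other symmetric singular minimal cone.

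To upgrade from minimality within the symmetric class to full global minimality, I would construct a monotone one-parameter family of solutions $\{u_{\tau}\}_{\tau\in\mathbb R}$ with $u_{\tau}\to\pm 1$ as $\tau\to\pm\infty$, containing $u^{\ast}$ as the leaf $\tau=0$. The geometric input is the Hardt-Simon foliation: on each side of $C_{i,j}$, under the hypotheses of the theorem, there is a smooth foliation by area-minimizing hypersurfaces $\{M_{\tau}\}$ collapsing onto $C_{i,j}$. Following the infinite-dimensional Lyapunov-Schmidt reduction in the spirit of del Pino-Kowalczyk-Wei used for the construction cited in \cite{M2,M3}, I would produce on each $M_{\tau}$ an Allen-Cahn solution $u_{\tau}$ whose zero set is a small normal graph over $M_{\tau}$. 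The monotonicity of the foliation (and reflection across $C_{i,j}$ in the $(2,6)$ case, where only one side carries a foliation) then yields an ordered family of solutions covering all values between $-1$ and $1$.

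Once the monotone family $\{u_{\tau}\}$ exists, global minimality of every leaf is automatic: a monotone family of solutions is a calibration in the sense of Alberti-Ambrosio-Cabre, so each $u_{\tau}$, and in particular $u^{\ast}$, satisfies $J(u^{\ast})\leq J(u^{\ast}+\phi)$ for every $\phi\in C_{0}^{\infty}(\Omega)$. The main obstacle, and what I expect to consume the bulk of the work, is constructing the $u_{\tau}$ and controlling the inner/outer matching near $C_{i,j}$ in the borderline regime $i+j=8$: there the Jacobi operator on the cone has indicial roots that are marginal, so the linearized errors in Lyapunov-Schmidt decay only polynomially with delicate exponents, and the condition $|i-j|\leq 4$ appears to be exactly the threshold that keeps the weighted estimates and the gluing scheme quantitatively closed. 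Verifying that this technical restriction is what ultimately forces the theorem's dimension range, rather than being an artefact of the method, would be the principal subtlety to resolve.
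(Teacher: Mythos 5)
There is a genuine gap in the step you rely on for global minimality. Your plan is to minimize only within the $O(i)\times O(j)$-symmetric class and then upgrade to full minimality by exhibiting $u^{\ast}$ as a leaf of a monotone family of solutions built by Lyapunov--Schmidt reduction over the Hardt--Simon foliation. But the reduction scheme only produces solutions adapted to leaves that are far from the singular cone: the Pacard--Wei construction (and the paper stresses this) yields $U^{\pm}_{\lambda}$ only for $\lambda\geq\lambda_{0}$ large, because near the cone the curvature of the leaves blows up and there is no small parameter to close the gluing. Consequently your family $\{u_{\tau}\}$ cannot sweep out the region between $U^{-}_{\lambda_{0}}$ and $U^{+}_{\lambda_{0}}$, which is exactly where $u^{\ast}$ lives; over a bounded domain near the origin the graphs of the $u_{\tau}$ do not cover all values taken by admissible competitors $u^{\ast}+\phi$, so the Alberti--Ambrosio--Cabre calibration argument does not apply. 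Moreover, even if such a family existed, nothing in your construction makes the separately obtained $u^{\ast}$ coincide with one of its leaves; producing solutions with interfaces arbitrarily close to $C_{i,j}$ and ordered is essentially the open problem (minimality of saddle-type solutions) that the theorem is designed to sidestep. A secondary issue is the existence step itself: for $i\neq j$ there is no reflection symmetry pinning the zero set to $C_{i,j}$, so the symmetric minimizers on $B_{R}$ with sign-matched boundary data may have interfaces that drift to infinity and the limit can be identically $\pm1$; a blow-down argument gives nothing in that case. (Also, the hypothesis $|i-j|\leq4$ when $i+j=8$ is there to ensure the cone is strictly area minimizing, hence a two-sided foliation and an \emph{ordered} Pacard--Wei family; it excludes $(2,6)$, so the one-sided variant you sketch is outside the theorem, and it is not a threshold for weighted gluing estimates.)

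The paper's route avoids both problems and is worth comparing with. It minimizes $J$ over \emph{all} of $H^{1,2}(\Omega_{d})$ (not the symmetric class) on an exhausting sequence of bounded domains $\Omega_{d}$, with Dirichlet data on a cross-section $L_{d}$ modeled on the heteroclinic $H$ centered at a fixed Hardt--Simon leaf $\Gamma^{+}_{a}$, plus a small correction $\rho\,\eta(t)\,|A|^{2}$; the $O(i)\times O(j)$ symmetry of the minimizer is then \emph{proved} a posteriori by min/max replacement and the strong maximum principle. The ordered Pacard--Wei solutions are used not as a calibration but as barriers: a sliding argument gives $U^{-}_{\lambda^{\ast}}<u_{d}<U^{+}_{\lambda^{\ast}}$ uniformly in $d$, which simultaneously prevents the interface from escaping and localizes it near $\Gamma^{+}_{a}$. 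Global minimality of the limit $U=\lim u_{d}$ is then immediate, since $J(u_{d})\leq J(u_{d}+\phi)$ for every compactly supported $\phi$ once $d$ is large. The remaining work, which replaces your matching analysis, is a Fermi-coordinate expansion showing the reduced function $h_{d}$ satisfies $J(h_{d})=O(l^{-5})$ for the Jacobi operator of the leaf, whence the nodal set satisfies $F(r)-f_{a}(r)=o(r^{-2})$; varying $a$ gives the asserted family, distinguished by their asymptotics and hence not one dimensional. If you want to salvage your outline, the essential correction is to minimize without the symmetry constraint and to import the ordered family only as upper and lower barriers, rather than as a foliation certifying minimality.
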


More detailed asymptotic behavior of this family of solutions could be found
in Proposition \ref{asym} below. This family of solutions could be
parametrized by the closeness of its zero-level set to the minimizing cones.
Recall that a result of Jerison and Monneau (\cite{JM}) proved that the
existence of a nontrivial global minimizer in $\mathbb{R}^{8}$ which is even
in all of its variables implies the existence of a family of counter-examples
for the De Giorgi conjecture in $\mathbb{R}^{9}.$ Hence an immediate corollary
of Theorem \ref{Main} is the following

\begin{corollary}
Suppose either $i+j\geq9$ or $i+j=8$ with $\left\vert i-j\right\vert \leq4.$
There is a family of monotone solutions to the Allen-Cahn equation $\left(
\ref{AC}\right)  $ in $\mathbb{R}^{i+j+1},$ which is not one-dimensional and
having $O\left(  i\right)  \times O\left(  j\right)  $ symmetry in the first
$i+j$ variables.
\end{corollary}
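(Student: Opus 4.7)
The plan is to combine Theorem \ref{Main} with the dimension-raising reduction of Jerison and Monneau \cite{JM}. Under either of the hypotheses of the corollary, Theorem \ref{Main} already produces a nontrivial global minimizer $u$ of \eqref{AC} in $\mathbb{R}^{i+j}$ which is $O(i)\times O(j)$-symmetric. The key observation for invoking \cite{JM} is a parity check: $O(i)$ contains each coordinate reflection $x_k\mapsto -x_k$ for $1\leq k\leq i$, and $O(j)$ contains the analogous reflections in the last $j$ variables; hence $u$ is automatically even in each of its $i+j$ variables. This is exactly the hypothesis required by the Jerison--Monneau result.

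Next I would apply the Jerison--Monneau theorem as quoted in the excerpt: starting from any nontrivial global minimizer in $\mathbb{R}^{n}$ that is even in all of its variables, one obtains a family of counter-examples to the De Giorgi conjecture in $\mathbb{R}^{n+1}$, i.e.\ bounded solutions of \eqref{AC} that are even in the first $n$ variables, monotone in $x_{n+1}$, and not one dimensional. Specializing to $n=i+j$ and feeding in the minimizer $u$ produced above yields the desired family in $\mathbb{R}^{i+j+1}$. Nontriviality (in the sense of non-one-dimensionality) along the family comes from the fact that the traces on suitable level sets $\{x_{n+1}=\mathrm{const}\}$ approximate $u$, whose zero set is asymptotic to the cone $C_{i,j}$ and hence cannot be a hyperplane.

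The remaining point is to upgrade the evenness in each of the first $i+j$ coordinates to the full $O(i)\times O(j)$ invariance asserted in the corollary. This should be essentially automatic from the structure of the Jerison--Monneau construction. That construction is variational/continuation in nature, and every ingredient (the equation, the ambient domain, the functional, and the boundary/asymptotic data induced by $u$) is invariant under the natural action of $O(i)\times O(j)$ on the first $i+j$ coordinates. Performing the construction inside the closed subspace of $O(i)\times O(j)$-invariant functions, or equivalently symmetrizing the output and appealing to uniqueness within the continuation class, produces a family that inherits the full symmetry.

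The substantive difficulty is packed into Theorem \ref{Main} itself; conditional on that result, the corollary is a bookkeeping exercise in equivariance and in checking compatibility of the symmetry hypothesis with \cite{JM}. The only step I would expect to require genuine care is the last one, ensuring that the Jerison--Monneau family can be chosen to be $O(i)\times O(j)$-symmetric rather than merely even in each of the first $i+j$ variables, but this is forced by the uniqueness/minimality built into their construction.
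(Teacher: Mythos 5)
Your proposal is correct and follows essentially the same route as the paper, which treats the corollary as an immediate consequence of Theorem \ref{Main} combined with the Jerison--Monneau construction, the only observation needed being that $O(i)\times O(j)$ invariance contains all coordinate reflections and hence gives evenness in every variable. The extra care you devote to propagating the full $O(i)\times O(j)$ symmetry and the non-one-dimensionality through the Jerison--Monneau construction is more than the paper itself records, and is consistent with its intent.
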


This corollary could be regarded as a parallel result due to Simon
\cite{Simon} on the existence of entire minimal graphs. Our idea of the proof
is quite straightforward. We shall firstly construct minimizers on bounded
domains, with suitable boundary conditions. As we enlarge the domain, we will
see that a subsequence of solutions on these bounded domains will converge to
a global minimizer, as one expected. To ensure that the solutions converge, we
will use the family of solutions constructed by Pacard-Wei\cite{PW} as
barriers. The condition that the cone we start with is strict area minimizing
is used to ensure that the solutions of Pacard-Wei are ordered. To show the
compactness and precise asymptotic behavior we use the convenient tool of
Fermi coordinate. The rest of the paper is devoted to the proof of Theorem
\ref{Main}.

\medskip

\textit{Acknowledgement.} The research of J. Wei is partially supported by
NSERC of Canada. Part of the paper was finished while Y. Liu was visiting the
University of British Columbia in 2016. He appreciates the institution for its
hospitality and financial support. K. Wang is supported by ``the Fundamental
Research Funds for the Central Universities". Y. Liu is partially supported by the Fundamental
Research Funds for the Central Universities 2014zzd10 and 13MS39".

\section{Solutions on bounded domains and their asymptotic behavior}

Let us first of all deal with the case that the cone is the Simons cone in
$\mathbb{R}^{8}.$ The starting point of our construction of global minimizers
will be the solutions of Pacard-Wei $\left(  \cite{PW}\right)  $ which we
describe below.

Let $\nu\left(  \cdot\right)  $ be a choice of the unit normal of the Simons
cone $C_{4,4}.$ Since we are interested in solutions with $O\left(  4\right)
\times O\left(  4\right)  $ symmetry, let us introduce
\begin{equation}
r=\sqrt{x_{1}^{2}+...+x_{4}^{2}}, \quad s=\sqrt{x_{5}^{2}+...+x_{8}^{2}},\quad
l=\sqrt{x_{1}^{2}+...+x_{8}^{2}}. \label{2.1n}%
\end{equation}
There is a smooth minimal surface $\Gamma^{+}$ lying in one side of the Simons
cone which is asymptotic to this cone and has the following properties (see
\cite{HS}). $\Gamma^{+}$ is invariant under the group of action of $O\left(
4\right)  \times O\left(  4\right)  .$ Outside of a ball, $\Gamma^{+}$ is a
graph over $C_{4,4}$ and
\[
\Gamma^{+}=\left\{  X+\left[  r^{-2}+O\left(  r^{-3}\right)  \right]
\nu\left(  X\right)  :X\in C_{4,4}\right\}  ,\text{ as }r\rightarrow+\infty.
\]
Similarly, there is a smooth minimal hypersurface $\Gamma^{-}$ in the other
side of the cone. For $\lambda\mathbb{\geq}0,$ let $\Gamma_{\lambda}^{\pm
}=\lambda\Gamma^{\pm}$ be the family of homotheties of $\Gamma^{\pm}.$ Then it
is known that $\Gamma_{\lambda}^{\pm}$ forms a foliation of $\mathbb{R}^{8}.$
We use $s=f_{\lambda}\left(  r\right)  $ to denote these minimal surfaces.

For $\lambda$ sufficiently large, say $\lambda\geq\lambda_{0},$ by a
construction of Pacard-Wei $\left(  \cite{PW}\right)  $, there exist solutions
$U_{\lambda}^{\pm}$ whose zero level set is close to $\Gamma_{\lambda}^{\pm}.$
Moreover, they depend continuously on the parameter $\lambda$ and are ordered.
That is,
\begin{align*}
U_{\lambda_{1}}^{+}\left(  X\right)   &  <U_{\lambda_{2}}^{+}\left(  X\right)
,\lambda_{1}<\lambda_{2}.\\
U_{\lambda_{1}}^{-}\left(  X\right)   &  <U_{\lambda_{2}}^{-}\left(  X\right)
,\lambda_{1}>\lambda_{2},\\
U_{\lambda_{0}}^{-}\left(  X\right)   &  <U_{\lambda_{0}}^{+}\left(  X\right)
.
\end{align*}
We use $N_{u}$ to denote the zero level set of a function $u.$ Suppose that in
the $r$-$s$ plane we have
\[
N_{U_{\lambda}^{\pm}}=\left\{  \left(  r,s\right)  :s=F_{\lambda}^{\pm}\left(
r\right)  \right\}  .
\]
Then we have the following asymptotic behavior$:$%
\[
F_{\lambda}^{\pm}\left(  r\right)  =r\pm\frac{\lambda^{3}}{r^{2}}+o\left(
r^{-2}\right)  ,\text{ as }r\rightarrow+\infty.
\]
It should be emphasized that the construction in \cite{PW} only gives us these
solutions when $\lambda$ is sufficiently large.

\begin{proposition}
\label{prop 3} As $\lambda\to+\infty$, $U^{\pm}_{\lambda}\to\pm1$ uniformly on
any compact set of ${\mathbb{R}}^{8}$.
\end{proposition}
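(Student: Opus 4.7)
The overall plan is to combine the monotonicity and ordering of $\{U^{+}_{\lambda}\}_{\lambda\geq\lambda_{0}}$ with standard elliptic compactness to produce a smooth entire limit $U^{+}_{\infty}$, exploit the escape to infinity of the nodal set $N_{U^{+}_{\lambda}}\sim\Gamma^{+}_{\lambda}=\lambda\Gamma^{+}$ to show $U^{+}_{\infty}>0$ everywhere, and finally identify $U^{+}_{\infty}\equiv 1$ via a Liouville-type argument. Uniform convergence on compacta then follows from Dini's theorem applied to the monotone sequence of continuous functions converging to a continuous limit; the corresponding statement $U^{-}_{\lambda}\to -1$ is treated analogously (with $U^{-}_{\lambda}$ monotone decreasing in $\lambda$).

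First I would observe that the ordering $U^{+}_{\lambda_{1}}<U^{+}_{\lambda_{2}}$ for $\lambda_{1}<\lambda_{2}$, together with $|U^{+}_{\lambda}|<1$, gives an increasing pointwise limit $U^{+}_{\infty}\leq 1$; Schauder estimates for the smooth semilinear equation $-\Delta u=u-u^{3}$ give uniform $C^{2,\alpha}_{\mathrm{loc}}$ bounds, so the convergence is in $C^{2}_{\mathrm{loc}}$ and $U^{+}_{\infty}$ is an entire solution. Next, since $\Gamma^{+}$ is a smooth hypersurface disjoint from the origin, $d^{+}:=\mathrm{dist}(0,\Gamma^{+})>0$, hence $\mathrm{dist}(K,\Gamma^{+}_{\lambda})\geq\lambda d^{+}-\sup_{X\in K}|X|\to\infty$ on every compact $K\subset\mathbb{R}^{8}$; by the asymptotic $F^{+}_{\lambda}(r)=r+\lambda^{3}r^{-2}+o(r^{-2})$ the nodal set $N_{U^{+}_{\lambda}}$ lies in a bounded tube around $\Gamma^{+}_{\lambda}$, so $N_{U^{+}_{\lambda}}\cap K=\emptyset$ for $\lambda$ large. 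Picking $X_{0}$ with $U^{+}_{\lambda_{0}}(X_{0})>0$, monotonicity yields $U^{+}_{\lambda}(X_{0})\geq U^{+}_{\lambda_{0}}(X_{0})>0$ for all $\lambda\geq\lambda_{0}$; by connectedness of the component $\{U^{+}_{\lambda}>0\}$ and its engulfment of $K$ for $\lambda$ large, $U^{+}_{\lambda}>0$ on $K$, so $U^{+}_{\infty}\geq 0$. The strong maximum principle applied to the superharmonic $U^{+}_{\infty}\in[0,1]$ then rules out an interior zero, giving $U^{+}_{\infty}>0$ pointwise.

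To conclude $U^{+}_{\infty}\equiv 1$, I would set $w:=1-U^{+}_{\infty}\in[0,1)$ and note $\Delta w=cw$ with $c=U^{+}_{\infty}(1+U^{+}_{\infty})>0$, so $w$ is subharmonic; at any interior point (or $C^{2}_{\mathrm{loc}}$-limit of translates) where $w$ attains $\sup w=M$, the second-derivative test $\Delta w\leq 0$ combined with $\Delta w(X_{\ast})=c(X_{\ast})M$ forces $M=0$ provided $c$ does not degenerate in the limit. A more direct route that sidesteps this Liouville step is to appeal to the Pacard-Wei construction itself, which writes $U^{+}_{\lambda}$ as an $H\circ t_{\lambda}$-style approximation (with $t_{\lambda}$ the signed Fermi coordinate to $\Gamma^{+}_{\lambda}$, oriented by the preceding positivity argument) plus a small correction; since $t_{\lambda}(X)\to +\infty$ uniformly on compacta as $\lambda\to\infty$ and $H\to +1$ exponentially, this gives $U^{+}_{\lambda}\to 1$ uniformly on compacta directly. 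The main obstacle is the degenerate case $\inf U^{+}_{\infty}=0$ in the bare Liouville argument, which requires additional input—either the Pacard-Wei pointwise asymptotic, or the $O(4)\times O(4)$ symmetry together with comparison against the lower barrier $U^{+}_{\lambda_{0}}$ (which itself approaches $+1$ on the appropriate side at infinity)—to exclude.
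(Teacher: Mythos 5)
Your second, ``more direct'' route is in substance the paper's own proof: the paper rescales, setting $u_{\varepsilon}^{\pm}(X)=U^{\pm}_{\lambda}(\lambda X)$ with $\varepsilon=\lambda^{-1}$, uses the fact from the Pacard--Wei construction that $\{u^{\pm}_{\varepsilon}=0\}$ lies in an $O(\varepsilon)$ neighborhood of $\Gamma^{\pm}$, and concludes from $\operatorname{dist}(0,\Gamma^{\pm})>0$ together with the equation (i.e.\ the structure $U^{\pm}_{\lambda}\approx H(t_{\lambda})$ away from the interface) that $u^{\pm}_{\varepsilon}$ is close to $\pm1$ on a fixed ball; your unrescaled version ($t_{\lambda}\to+\infty$ on compacta, $H\to\pm1$ exponentially, plus a uniformly small correction) is the same argument, and your determination of the sign via the ordering in $\lambda$ and connectedness of a large ball avoiding the nodal set is a correct and somewhat more explicit treatment of a point the paper leaves implicit.

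Your primary plan (monotone limit $U^{+}_{\infty}$ plus a Liouville step) is, as you yourself note, genuinely incomplete: positivity of $U^{+}_{\infty}$ does not by itself force $U^{+}_{\infty}\equiv1$, because the translation-limit argument for $w=1-U^{+}_{\infty}$ breaks down exactly when $\inf U^{+}_{\infty}=0$, and soft maximum-principle tools cannot exclude that case. The suggested patch ``compare with the lower barrier $U^{+}_{\lambda_{0}}$'' does not close this gap either, since $U^{+}_{\lambda_{0}}$ is close to $-1$ on the far side of its own interface and so provides no global positive lower bound; what does close it is precisely the Pacard--Wei expansion $U^{+}_{\lambda}=H(t-h)+\phi$ with $\phi$ uniformly small, i.e.\ the direct route. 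Two smaller points: the claim that $N_{U^{+}_{\lambda}}$ lies in a bounded tube around $\Gamma^{+}_{\lambda}$ should be justified by the uniform smallness of the correction in the construction (equivalently the $O(\varepsilon)$-neighborhood statement), not by the asymptotic $F^{+}_{\lambda}(r)=r+\lambda^{3}r^{-2}+o(r^{-2})$, which is only an $r\to\infty$ statement for fixed $\lambda$; and Dini's theorem is unnecessary once you have the direct estimate (or the $C^{2}_{loc}$ convergence from elliptic estimates).
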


\begin{proof}
Denote $\varepsilon=\lambda^{-1}$ and $u_{\varepsilon}^{\pm}(X):=U_{\lambda
}^{\pm}(\lambda X)$. Then $u_{\varepsilon}^{\pm}$ are solutions to the
singularly perturbed Allen-Cahn equation
\[
-\varepsilon\Delta u_{\varepsilon}=\frac{1}{\varepsilon}\left(  u_{\varepsilon
}-u_{\varepsilon}^{3}\right)  .
\]
Moreover, the construction of \cite{PW} implies that $\{u_{\varepsilon}^{\pm
}=0\}$ lies in an $O(\varepsilon)$ neighborhood of $\Gamma^{\pm}$. Because the
distance from the origin to $\Gamma^{\pm}$ is positive, by the equation, we
see $u_{\varepsilon}^{\pm}$ is close to $\pm1$ in a fixed ball around the
origin. Rescaling back we finish the proof.
\end{proof}

\subsection{Minimizing arguments and solutions with $O\left(  4\right)  \times
O\left(  4\right)  $ symmetry}

For each $a\in\mathbb{R},$ we would like to construct a solution whose zero
level set in the $r$-$s$ plane is asymptotic to the curve
\[
s=r+ar^{-2}%
\]
at infinity. Without loss of generality, let us assume $a\geq0.$

Consider the first quadrant of the $r$-$s$ plane. Let $\left(  l,t\right)  $
be the Fermi coordinate around the minimal surface $\Gamma_{a}^{+},$ where $t$
is the signed distance to $\Gamma_{a}^{+}.$ Keep in mind that this Fermi
coordinate is not smoothly defined on the whole space $\mathbb{R}^{8}$. (It
may not be smooth around the axes.) For each $d$ large, let $L_{d}$ be the
line orthogonal to $\Gamma_{a}^{+}$ at the point $\left(  d,f_{a}\left(
d\right)  \right)  .$ Denote the domain enclosed by $L_{d}$ and the $r,s$ axes
by $\Omega_{d}$. (This domain should be considered as a domain in
${\mathbb{R}}^{8}$. We still denote it by $\Omega_{d}$ for notational
simplicity). We shall impose Neumann boundary condition on $r,s$ axes and
suitable Dirichlet boundary condition on $L_{d},$ to get a minimizer for the
energy functional.

Let $H_{d}^{\ast}\left(  \cdot\right)  $ be a smooth function defined on
$L_{d},$ equal to $H\left(  t\right)  $ away from the $r$-$s$ axes, where $H$
is the one dimensional heteroclinic solution. Since $\int_{\mathbb{R}}t
H^{^{\prime}}\left(  t\right)   ^{2}dt=0$, there exists a unique solution of
the problem
\[
\left\{
\begin{array}
[c]{l}%
-\eta^{\prime\prime}+\left(  3H^{2}-1\right)  \eta=-tH^{\prime},\\
\int_{\mathbb{R}}\eta H^{\prime}=0.
\end{array}
\right.
\]
This solution will be denoted by $\eta(\cdot)$. (There is an explicit form for
$\eta$, see \cite{M2}, but we will not use this fact.)

Let $\varepsilon>0$ be a small constant. Let $\rho$ be a cut-off function
defined outside the unit ball, equal to $1$ in the region $\left\{
\varepsilon s<r<\varepsilon^{-1}s\right\}  ,$ equal to $0$ near the $r,s$
axes. It is worth pointing out that the Fermi coordinate is smoothly defined
in the region $\left\{  \varepsilon s<r<\varepsilon^{-1}s\right\}  \backslash
B_{R}\left(  0\right)  ,$ for $R$ sufficiently large. We seek a minimizer of
the function $J$ within the class of functions
\[
S_{d}:=\left\{  \phi\in H^{1,2}\left(  \Omega_{d}\right)  :\phi=H_{d}^{\ast
}+\rho\eta\left(  t\right)  \left\vert A\right\vert ^{2}\text{ on }%
L_{d}\right\}  .
\]
Here $\left\vert A\right\vert ^{2}$ is the squared norm of the second
fundamental form of the minimal surface $\Gamma_{a}^{+}$ and hence it decays
like $O\left(  r^{-2}\right)  $ as $r$ tends to infinity. Slightly modifying
the function $H_{d}^{\ast}$ near the axes if necessary, using the asymptotic
expansion of the solutions $U_{\lambda}^{\pm},$ we could assume that%
\begin{equation}
U_{\lambda_{0}}^{-}<H_{d}^{\ast}+\rho\eta\left(  t\right)  \left\vert
A\right\vert ^{2}<U_{\lambda_{0}}^{+}\text{ on }\ L_{d}. \label{2.2n}%
\end{equation}

Let $u=u_{d}$ be a minimizer of the functional $J$ over $S_{d}.$ The existence
of $u$ follows immediately from standard arguments. But in principle, we may
not have uniqueness. Intuitively speaking, the uniqueness of minimizer should
be an issue related to minimizing property of the saddle-shaped solutions.

\begin{proposition}
$u_{d}$ is invariant under the natural action of $O\left(  4\right)  \times
O\left(  4\right)  .$
\end{proposition}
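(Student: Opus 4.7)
The plan is to exploit the fact that the functional $J$, the domain $\Omega_d$, and the boundary datum are all invariant under the action of $G:=O(4)\times O(4)$, then combine a lattice--rearrangement with the strong maximum principle. Uniqueness of the minimizer is not available, so one cannot simply invoke ``$u_d$ is unique, hence equals $u_d\circ g^{-1}$''. Instead I will show directly that $u_d^g(x):=u_d(g^{-1}x)$ coincides with $u_d$ for every $g\in G$.

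First I verify the invariances. The hypersurface $\Gamma_a^+$ is $O(4)\times O(4)$--invariant, so the Fermi variable $t$ and the scalar $|A|^2$ are $G$--invariant; the cut--off $\rho$ depends only on $(r,s)$, so it is $G$--invariant; $L_d$ is $G$--invariant by construction. Hence $H_d^\ast+\rho\eta(t)|A|^2$ is $G$--invariant, $S_d$ is $G$--stable, and $u_d^g\in S_d$ with $J(u_d^g)=J(u_d)=\inf_{S_d}J$.

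Next I apply the standard ``max/min'' trick. Set $w:=\max(u_d,u_d^g)$ and $v:=\min(u_d,u_d^g)$. Both lie in $H^{1,2}(\Omega_d)$ and, since $u_d=u_d^g$ on $L_d$ (by $G$--invariance of the boundary data), both lie in $S_d$. Using the pointwise a.e.\ identities
\[
|\nabla w|^2+|\nabla v|^2=|\nabla u_d|^2+|\nabla u_d^g|^2,\qquad (w^2-1)^2+(v^2-1)^2=(u_d^2-1)^2+((u_d^g)^2-1)^2,
\]
(the second one because $\{w(x),v(x)\}=\{u_d(x),u_d^g(x)\}$ pointwise) one gets $J(w)+J(v)=J(u_d)+J(u_d^g)=2\inf_{S_d}J$. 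Since $J(w),J(v)\geq\inf_{S_d}J$, both equal the infimum. Hence $w$ and $v$ are themselves minimizers of $J$ on $S_d$, so they weakly satisfy $-\Delta u=u-u^3$ in $\Omega_d$ with the correct Dirichlet data on $L_d$, and by elliptic regularity they are smooth up to the smooth boundary $L_d$.

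Finally I close by the strong maximum principle. The functions $\phi:=w-u_d\geq 0$ and $\psi:=w-u_d^g\geq 0$ are smooth, vanish on $L_d$, and solve linear elliptic equations of the form $-\Delta\phi=c(x)\phi$ with $c$ bounded (obtained from the factorisation $w-w^3-(u_d-u_d^3)=(w-u_d)(1-w^2-wu_d-u_d^2)$, and similarly for $\psi$). The strong maximum principle gives the dichotomy $\phi\equiv 0$ or $\phi>0$ in $\Omega_d$, and likewise for $\psi$. But $\phi\psi\equiv 0$ pointwise (at each point $w$ equals one of $u_d,u_d^g$), so both cannot be strictly positive; thus $\phi\equiv 0$ or $\psi\equiv 0$. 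After relabeling, $u_d\geq u_d^g$ everywhere in $\Omega_d$. Since $g$ is volume--preserving and $\Omega_d$ is $g$--invariant, $\int_{\Omega_d}u_d=\int_{\Omega_d}u_d^g$, which together with the pointwise inequality forces $u_d\equiv u_d^g$. As $g\in G$ was arbitrary, $u_d$ is $G$--invariant. The only delicate point is ensuring that the lattice operations produce genuine $H^{1}$--competitors satisfying the Dirichlet condition on $L_d$ and that the maximum principle applies up to $L_d$; both are standard once the $G$--invariance of the boundary datum is checked, and the coordinate axes cause no issue because they are interior to $\Omega_d\subset\mathbb{R}^8$ after lifting.
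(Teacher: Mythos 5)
Your argument is correct, and its scaffolding (invariance of $J$, of $\Omega_d$ and of the boundary datum; the max/min competitors having the same total energy, hence both being minimizers, hence solutions; the strong maximum principle) coincides with the paper's, but you close the argument in a genuinely different way. The paper only treats reflections: for a reflection $e$ the functions $\min\left(u_d,u_d\circ e\right)$ and $\max\left(u_d,u_d\circ e\right)$ automatically coincide on the fixed hyperplane of $e$, which is interior to $\Omega_d$, so the strong maximum principle gives $u_d=u_d\circ e$ directly, and invariance under all of $O\left(4\right)\times O\left(4\right)$ follows because the group is generated by reflections. You instead handle an arbitrary $g$ in one stroke: from the pointwise identity $\phi\psi\equiv0$ together with the interior dichotomy you obtain the global ordering $u_d\geq u_d\circ g^{-1}$ (or the reverse), and you upgrade this to equality via the measure-preserving change of variables $\int_{\Omega_d}u_d=\int_{\Omega_d}u_d\circ g^{-1}$, which is valid since $g$ is orthogonal and $\Omega_d$ is $g$-invariant. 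Your route buys generality (it needs no generation by reflections, only a measure-preserving symmetry group fixing $\Omega_d$ and the datum) at the price of the extra dichotomy and integral step; the paper's reduction to reflections is shorter because the interior contact set comes for free. Two cosmetic caveats: the rotated $L_d$ need not be smooth where it meets the axes, so ``smooth up to the boundary'' is an overstatement, but you never actually use boundary regularity, only interior regularity and equality of traces; and, exactly as in the paper, one must (and may) choose the modification $H_d^{\ast}$ near the axes to be $O\left(4\right)\times O\left(4\right)$-invariant so that the boundary datum is indeed $G$-invariant, which you correctly flag as the hypothesis underpinning $u_d=u_d^{g}$ on $L_d$.
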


\begin{proof}
Let $e\in O\left(  4\right)  \times O\left(  4\right)  $. Then due to the
invariance of the energy functional and the boundary condition, $u\left(
e\cdot\right)  $ is still a minimizer. By elliptic regularity, $u$ is smooth.

Suppose $e$ is given by: $\left(  x_{1},x_{2},...,x_{8}\right)  \rightarrow
\left(  -x_{1},x_{2},...,x_{8}\right)  .$ We first show that $u\left(
x\right)  =u\left(  ex\right)  $ for any $x\in\Omega_{d}.$ Assume to the
contrary that this is not true. Let us consider the functions
\begin{align*}
w_{1}\left(  x\right)   &  :=\min\left\{  u\left(  x\right)  ,u\left(
ex\right)  \right\}  ,\\
w_{2}\left(  x\right)   &  :=\max\left\{  u\left(  x\right)  ,u\left(
ex\right)  \right\}  .
\end{align*}
Since $u\left(  \cdot\right)  $ and $u\left(  e\cdot\right)  $ have the same
boundary condition, $w_{1}$ and $w_{2}$ are also minimizers of the functional
$J.$ Hence they are solutions of the Allen-Cahn equation. Since $w_{1}\leq
w_{2}$ and $w_{1}\left(  0,x_{2},...,x_{8}\right)  =w_{2}\left(
0,x_{2},...,x_{8}\right)  ,$ by the strong maximum principle, $w_{1}=w_{2}.$
It follows that $u\left(  x\right)  =u\left(  ex\right)  .$

Let us use $y_{1}$ to denote the first four coordinates $\left(
x_{1},...,x_{4}\right)  $ and $y_{2}$ denote the last four coordinates
$\left(  x_{5},...,x_{8}\right)  .$ Suppose $\bar{e}$ is a reflection across a
three dimensional hyperplane $L$ in $\mathbb{R}^{4}$ which passes through
origin. This gives us a corresponding element in $O\left(  4\right)  \times
O\left(  4\right)  ,$ still denoted by $\bar{e},$%
\[
\bar{e}\left(  y_{1},y_{2}\right)  :=\left(  \bar{e}\left(  y_{1}\right)
,y_{2}\right)  .
\]
Similar arguments as above tell us that $u\left(  x\right)  =u\left(  \bar
{e}x\right)  $ for any $x\in\Omega_{d}.$ Because $O\left(  4\right)  \times
O\left(  4\right) $ is generated by reflections, this implies that $u$ is
invariant under the group of action of $O\left(  4\right)  \times O\left(
4\right)  .$
\end{proof}

Let us now fix a number $\lambda^{\ast}>\max\left\{  |a|^{\frac{1}{3}},\lambda_{0}\right\}  .$

\begin{lemma}
\label{barrier}For any $d$ large, there holds
\begin{equation}
\label{2.3nn}U_{\lambda^{\ast}}^{-}<u_{d}<U_{\lambda^{\ast}}^{+}.
\end{equation}

\end{lemma}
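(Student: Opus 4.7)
The plan is to prove both inequalities by a monotone sliding argument on the Pacard--Wei family $\{U_{\lambda}^{\pm}\}_{\lambda\geq\lambda_{0}}$, using (\ref{2.2n}) as the barrier condition on the Dirichlet boundary $L_{d}$.

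The first step promotes (\ref{2.2n}) from $\lambda_{0}$ to $\lambda^{\ast}$. Because $\lambda^{\ast}>\lambda_{0}$, the stated ordering ($U_{\lambda_{1}}^{+}<U_{\lambda_{2}}^{+}$ for $\lambda_{1}<\lambda_{2}$, and $U_{\lambda_{1}}^{-}<U_{\lambda_{2}}^{-}$ for $\lambda_{1}>\lambda_{2}$) gives $U_{\lambda^{\ast}}^{+}>U_{\lambda_{0}}^{+}$ and $U_{\lambda^{\ast}}^{-}<U_{\lambda_{0}}^{-}$ pointwise. Combined with (\ref{2.2n}) and the fact that $u_{d}=H_{d}^{\ast}+\rho\eta(t)|A|^{2}$ on $L_{d}$ by definition of $S_{d}$, this yields the strict boundary inequality
\[
U_{\lambda^{\ast}}^{-}<u_{d}<U_{\lambda^{\ast}}^{+}\quad\text{on }L_{d}.
\]

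For the upper bound, I would slide the parameter upward from $\lambda^{\ast}$. By the maximum principle for the Allen--Cahn equation, $|u_{d}|\leq 1$ on $\overline{\Omega_{d}}$, while Proposition~\ref{prop 3} gives $U_{\lambda}^{+}\to 1$ uniformly on the compact set $\overline{\Omega_{d}}$ as $\lambda\to\infty$. Hence
\[
\Lambda := \bigl\{\lambda\geq\lambda^{\ast} : U_{\lambda}^{+}\geq u_{d}\text{ on }\overline{\Omega_{d}}\bigr\}
\]
is non-empty, and $\lambda_{1}:=\inf\Lambda$ satisfies $U_{\lambda_{1}}^{+}\geq u_{d}$ on $\overline{\Omega_{d}}$ with equality at some $x_{0}\in\overline{\Omega_{d}}$ by continuity of $\lambda\mapsto U_{\lambda}^{+}$. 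Suppose for contradiction $\lambda_{1}>\lambda^{\ast}$. Then the ordering gives $U_{\lambda_{1}}^{+}>U_{\lambda^{\ast}}^{+}>u_{d}$ on $L_{d}$, so $x_{0}$ must lie in the interior of $\Omega_{d}\subset\mathbb{R}^{8}$. Since both $U_{\lambda_{1}}^{+}$ and $u_{d}$ solve the Allen--Cahn equation, $v:=U_{\lambda_{1}}^{+}-u_{d}$ satisfies
\[
-\Delta v = v\bigl(1-(U_{\lambda_{1}}^{+})^{2}-U_{\lambda_{1}}^{+}u_{d}-u_{d}^{2}\bigr)
\]
with bounded coefficients, is nonnegative on $\Omega_{d}$, and vanishes at the interior point $x_{0}$. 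The strong maximum principle then forces $v\equiv 0$ on the connected domain $\Omega_{d}$, contradicting the strict boundary inequality. Hence $\lambda_{1}=\lambda^{\ast}$, giving $u_{d}\leq U_{\lambda^{\ast}}^{+}$; one more application of the strong maximum principle (using the strict boundary inequality) upgrades this to $u_{d}<U_{\lambda^{\ast}}^{+}$.

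The lower bound $u_{d}>U_{\lambda^{\ast}}^{-}$ follows by the mirror argument, decreasing $\lambda$ from $\lambda^{\ast}$ and exploiting $U_{\lambda}^{-}\to -1$ on compacts together with the second line of the Pacard--Wei ordering. The most delicate point in the whole scheme is to guarantee that (\ref{2.2n}) really holds on \emph{all} of $L_{d}$, including near the $r,s$ axes where the Fermi coordinate degenerates; this has been arranged by the ad hoc modification of $H_{d}^{\ast}$ near the axes described just before (\ref{2.2n}) and is consistent with the asymptotic expansion $F_{\lambda}^{\pm}(r)=r\pm\lambda^{3}r^{-2}+o(r^{-2})$ together with $\lambda^{\ast}>|a|^{1/3}$. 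Once (\ref{2.2n}) is in hand, the sliding plus strong maximum principle argument above is standard.
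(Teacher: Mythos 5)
Your proof is correct and is essentially the paper's own argument: both start from Proposition \ref{prop 3} to dominate $u_d$ by $U_\lambda^{+}$ for $\lambda$ large, then slide $\lambda$ down to $\lambda^{\ast}$, using the ordering of the Pacard--Wei family together with (\ref{2.2n}) to keep the strict inequality on $L_d$ and the strong maximum principle to exclude an interior touching point (and the mirror argument for $U_{\lambda^{\ast}}^{-}$). The only point worth noting (equally implicit in the paper) is that non-emptiness of your set $\Lambda$ needs $\sup_{\overline{\Omega_d}}u_d<1$ strictly, which follows from the boundary data lying strictly between $\pm1$ and the strong maximum principle, not from $|u_d|\le 1$ alone.
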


\begin{proof}
By Proposition \ref{prop 3}, for $\lambda$ sufficiently large, $u_{d}%
<U_{\lambda}^{+}$ on $\Omega_{d}.$ Now let us continuously decrease the value
of $\lambda.$ Since we have a continuous family of solutions $U_{\lambda}%
^{+},$ and for $\lambda>\lambda^{\ast},$ each of them is greater than $u_{d}$
on the boundary of $\Omega_{d}$ (recalling (\ref{2.2n}) and the ordering
properties of $U_{\lambda}^{\pm}$), hence by the strong maximum principle,
$u_{d}<U_{\lambda^{\ast}}^{+}.$ Similarly, one could show that $U_{\lambda
^{\ast}}^{-}<u_{d}.$ This finishes the proof.
\end{proof}

\subsection{Asymptotic analysis of the solutions\label{2.3}}

The minimizers $u_{d}$ are invariant under $O\left(  4\right)  \times O\left(
4\right)  $ action. By $\left(  \ref{2.3nn}\right)  ,$ we know that as $d$
goes to infinity, $u_{d}\rightarrow U$ in $C_{loc}^{2}$ for a nontrivial
entire solution $U$ to the Allen-Cahn equation.

We then claim

\begin{lemma}
$U$ is a global minimizer.
\end{lemma}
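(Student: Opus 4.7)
The plan is to use the standard argument for producing global minimizers as limits of minimizers on an exhausting sequence of bounded domains. Fix an arbitrary bounded domain $\Omega\subset\mathbb{R}^{8}$ and an arbitrary perturbation $\phi\in C_0^\infty(\Omega)$; the goal is to deduce $J_\Omega(U)\leq J_\Omega(U+\phi)$.

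The first step is to observe that $u_d+\phi$ is an admissible competitor for $u_d$ once $d$ is large. Since the hypersurface $L_d$ recedes to infinity as $d\to\infty$, for all sufficiently large $d$ one has $\overline{\Omega}\subset\Omega_d$. Because $\phi$ has compact support inside $\Omega$, it vanishes in a neighborhood of $L_d$, so $u_d+\phi$ agrees with $u_d$ on $L_d$ and hence lies in $S_d$. The minimality of $u_d$ on $\Omega_d$ then gives
\[
J_{\Omega_d}(u_d)\leq J_{\Omega_d}(u_d+\phi),
\]
and since $u_d$ and $u_d+\phi$ coincide on $\Omega_d\setminus\Omega$, this localizes to $J_\Omega(u_d)\leq J_\Omega(u_d+\phi)$.

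The second step is to pass to the limit $d\to\infty$ in the localized inequality. The $C^2_{\mathrm{loc}}$ convergence $u_d\to U$ recorded just before the lemma statement (which itself follows from the uniform barrier enclosure of Lemma \ref{barrier} together with interior elliptic regularity) implies, since $\overline\Omega$ is compact, that $u_d\to U$ and $u_d+\phi\to U+\phi$ uniformly with first derivatives on $\overline\Omega$. Therefore $J_\Omega(u_d)\to J_\Omega(U)$ and $J_\Omega(u_d+\phi)\to J_\Omega(U+\phi)$, and the limiting inequality $J_\Omega(U)\leq J_\Omega(U+\phi)$ is exactly the global minimizer property.

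I do not anticipate any serious obstacle: the delicate inputs, namely the choice of boundary datum $H_d^\ast+\rho\eta(t)|A|^2$ on $L_d$ and the Pacard--Wei barrier enclosure, have already been arranged in the preceding subsection, and what remains is a routine localization and limiting argument. The only point worth remarking on is that $\phi$ need not respect the $O(4)\times O(4)$ symmetry of $u_d$, which is harmless because $u_d$ minimizes $J$ over the full class $S_d$ and not merely over its symmetric subclass.
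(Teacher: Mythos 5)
Your proposal is correct and follows essentially the same route as the paper: the paper's proof is exactly the comparison $J(u_d)\leq J(u_d+\phi)$ for $d$ large enough that $\mathrm{supp}(\phi)\subset\Omega_d$, followed by letting $d\to+\infty$. You merely spell out the localization to $\Omega$ and the use of the $C^2_{\mathrm{loc}}$ convergence in the limiting step, which the paper leaves implicit.
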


\begin{proof}
Let $\phi\in C_{0}^{\infty}({\mathbb{R}}^{n})$ be any fixed function. Then for
$d>2R_{0},$ where $supp(\phi)\subset B_{R_{0}}(0)$, we have
\begin{equation}
J(u_{d})\leq J(u_{d}+\phi)
\end{equation}
Letting $d\rightarrow+\infty$ we arrive at the conclusion.
\end{proof}

By Lemma \ref{barrier}, the nodal set $N_{U}$ of $U$ must lie between
$N_{U_{\lambda^{\ast}}^{+}}$ and $N_{U_{\lambda^{\ast}}^{-}}.$ We use
$s=F\left(  r\right)  $ to denote the nodal set of $U$.

The main result of this section is the following

\begin{proposition}
\label{asym}The zero level set of $U$ has the following asymptotic behavior:
\begin{equation}
F\left(  r\right)  -f_{a}\left(  r\right)  =o\left(  r^{-2}\right)  ,\text{ as
}r\rightarrow+\infty. \label{Asy}%
\end{equation}

\end{proposition}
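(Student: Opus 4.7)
The plan is to carry out a Fermi-coordinate expansion of $u_d$ about $\Gamma^+_a$, control the remainder uniformly in $d$, and then read off the nodal set of the limit $U$. In the region $\{\varepsilon s<r<\varepsilon^{-1}s\}\setminus B_R$ the Fermi coordinates $(l,t)$ of $\Gamma^+_a$ are smoothly defined. There the Laplacian reads $\Delta = \partial_t^2 + H_{\mathrm{mean}}(t)\partial_t + \Delta_{\Gamma_t}$ with $H_{\mathrm{mean}}(t) = t|A|^2 + O(t^2)$, the leading term vanishing because $\Gamma^+_a$ is minimal. Substituting the ansatz $u = H(t) + \eta(t)|A|^2$ into the Allen-Cahn equation, the leading curvature-induced error $-tH'(t)|A|^2$ is cancelled exactly by the defining ODE $-\eta'' + (3H^2-1)\eta = -tH'$ for $\eta$. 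The residual, coming from tangential derivatives of $|A|^2$, nonlinear corrections, and the next-order expansion of $H_{\mathrm{mean}}(t)$, is $O(r^{-3})$, strictly smaller than $r^{-2}$.

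Next, write $u_d = H(t) + \eta(t)|A|^2 + \psi_d$. The remainder $\psi_d$ satisfies a semilinear equation whose linear part is $\mathcal{L} := -\partial_t^2 - \Delta_{\Gamma^+_a} + (3H^2-1)$ and whose right-hand side is $O(r^{-3})$. By the choice of Dirichlet data, $\psi_d$ vanishes on $L_d$ outside the cutoff region, and Lemma \ref{barrier} supplies a uniform $L^\infty$ bound. The plan is to apply a weighted Schauder-type estimate for $\mathcal{L}$, exploiting its coercivity on the $L^2(dt)$-orthogonal complement of $H'(t)$ (the only transverse kernel element), to conclude $\psi_d(X) = o(r^{-2})$ uniformly in $d$. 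Passing to the limit $d \to \infty$ gives $U = H(t) + \eta(t)|A|^2 + o(r^{-2})$ in the Fermi tube around $\Gamma^+_a$.

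To extract the nodal set, observe that the forcing $-tH'(t)$ is odd in $t$ while $3H^2 - 1$ is even, so the constraint $\int \eta H' = 0$ forces $\eta$ to be odd, and in particular $\eta(0) = 0$. Hence $U(l,0) = o(r^{-2})$, and since $H'(0) > 0$ the implicit function theorem furnishes a unique zero $t = t^\ast(l) = o(r^{-2})$ of $U(l,\cdot)$ near $t=0$. A short geometric computation relates this normal coordinate to the vertical displacement, $F(r) - f_a(r) = t^\ast(l)\sqrt{1+f_a'(r)^2} + o(t^\ast)$, and since $\sqrt{1+f_a'(r)^2} \to \sqrt{2}$ as $r \to \infty$, one obtains $F(r) - f_a(r) = o(r^{-2})$.

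The main obstacle is the uniform $o(r^{-2})$ estimate on $\psi_d$. Two ingredients are delicate: (i) the Fermi coordinates degenerate near the axes $\{r=0\}\cup\{s=0\}$, so the inner Fermi-coordinate analysis must be glued to the Pacard-Wei barrier sandwich in the transition region where the cutoff $\rho$ activates; and (ii) the operator $\mathcal{L}$ must be inverted in a weighted norm that separates the transverse Fredholm structure relative to $\mathrm{span}\{H'(t)\}$ from the tangential problem on $\Gamma^+_a$, whose geometry is only asymptotically conical and requires care both at the singular origin of the limiting cone $C_{4,4}$ and at infinity where the decay is only polynomial.
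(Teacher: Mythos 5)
There is a genuine gap, and it sits exactly at the step you flag but then wave through: the claim that coercivity of $\mathcal{L}=-\partial_t^2-\Delta_{\Gamma_a^+}+(3H^2-1)$ on the orthogonal complement of $H'(t)$, plus an $O(r^{-3})$ error, yields $\psi_d=o(r^{-2})$ uniformly in $d$. Your remainder $\psi_d$ is not orthogonal to $H'(t)$: it contains the geometric displacement mode $h_d(l)\,\bar H'(t)$ describing how far the nodal set of $u_d$ sits from $\Gamma_a^+$, and this component is governed not by the coercive transverse operator but by the Jacobi operator of the cone, $J(h)=h''+\frac{6}{l}h'+\frac{6}{l^2}h$, whose decaying homogeneous solutions are exactly $l^{-2}$ and $l^{-3}$ (the indicial roots in dimension $8$). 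Two consequences. First, an $O(r^{-3})$ bound on the error is not enough: $J(h)=O(l^{-3})$ admits particular solutions of size $O(l^{-1})$, which would destroy even the $O(r^{-2})$ statement. This is why the paper spends Lemma \ref{error} and Proposition \ref{Es3} showing that the projection of the error onto $\bar H'$ is $O(l^{-5})$ — using the cancellation $\sum_i k_i^3=O(r^{-5})$ for the minimal surface $\Gamma_a^+$, the correction $\eta(t)|A|^2$ to remove the $t|A|^2\bar H'$ term, and a bootstrap improving $|h_d'|\le Cl^{-3}$, $|h_d''|\le Cl^{-4}$, $|\bar\phi|\le Cl^{-4}$. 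Second, even with $J(h_d)=O(l^{-5})$, the homogeneous $l^{-2}$ Jacobi field can still be present with an arbitrary bounded coefficient: all one gets for the limit is $F(r)=r+br^{-2}+O(r^{-3})$ for \emph{some} $b\in[-\lambda^*,\lambda^*]$ (the paper's Lemma \ref{b}). A weighted Schauder estimate cannot exclude this mode, because $h(l)H'(t)$ with $J(h)=0$ solves the linearized equation to higher order and is precisely of size $r^{-2}$, the borderline you need to beat.

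The actual content of Proposition \ref{asym} is therefore the identification $b=a$, and your proposal contains no mechanism for it. The paper's mechanism uses the outer Dirichlet data quantitatively: since the boundary value on $L_d$ is centered on $\Gamma_a^+$, one has $h_d(d)=0$; variation of parameters for $J(h_d)=O(l^{-5})$ on $(r_0,d)$ gives $h_d=c_1l^{-2}+c_2l^{-3}+O(l^{-3}\ln l)$ with $d$-dependent constants; evaluating at $l=d$ gives $c_1+c_2d^{-1}=O(d^{-1}\ln d)$, while convergence $u_d\to U$ at a fixed large $r_0$ gives $c_1+c_2r_0^{-1}+o(1)=b-a$ up to $O(r_0^{-1}\ln r_0)$; subtracting and letting $d\to+\infty$ forces $c_2$ bounded and $c_1\to0$, hence $b=a$. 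So the decomposition $u_d=\bar H(t-h_d(l))+\phi_d$ with the orthogonality condition, the $O(l^{-5})$ estimate on $J(h_d)$, and the two-point matching argument in $d$ are not technical refinements you can defer to ``care at infinity''; they are the proof. Your outline, as written, would at best reprove Lemma \ref{b}, i.e.\ $F(r)-f_a(r)=O(r^{-2})$ with an undetermined coefficient, not the claimed $o(r^{-2})$.
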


The proof of this proposition relies on detailed analysis of the asymptotic
behavior of the sequence of solutions $u_{d}.$\medskip

To begin with, let us define an approximate solution as%
\[
\bar{H}\left(  l,t\right)  =\rho H\left(  t-h_{d}\left(  l\right)  \right)
+\left(  1-\rho\right)  \frac{H\left(  t-h_{d}\left(  l\right)  \right)
}{\left\vert H\left(  t-h_{d}\left(  l\right)  \right)  \right\vert },
\]
where $\rho$ is the cutoff function introduced before. We shall write the
solution $u_{d}$ in the Fermi coordinate as
\begin{equation}
u_{d}\left(  l,t\right)  =\bar{H}+\phi_{d}, \label{decom}%
\end{equation}
for some small function $h_{d}.$ Introduce the notation $\bar{H}^{\prime
}:=H^{\prime}\left(  t-h\right)  .$ We require the following orthogonality
condition on $\phi_{d}:$%
\[
\int_{\mathbb{R}}\phi_{d}\rho\bar{H}^{\prime}dt=0.
\]
Since $u_{d}$ is close to $H\left(  t\right)  ,$ we could find a unique small
function $h_{d}$ satisfying $\left(  \ref{decom}\right)  $ using implicit
function theorem for each fixed $l.$

Our starting point for the asymptotic analysis is the following estimate:

\begin{lemma}
\label{De1}The function $h_{d}$ and $\phi_{d}$ satisfy
\[
\left\vert h_{d}\right\vert +\left\vert \phi_{d}\right\vert +\left\vert
h_{d}^{\prime}\right\vert +\left\vert h_{d}^{\prime\prime}\right\vert \leq
Cl^{-2},
\]
where $C$ does not depend on $d.$
\end{lemma}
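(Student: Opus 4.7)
The plan is to derive the estimate in three stages, bootstrapping from the sandwich bound in Lemma \ref{barrier} through Fermi-coordinate analysis of the Allen-Cahn equation near $\Gamma_a^+$.

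First I would establish the pointwise bound on $h_d$ directly from the barriers. Since $\lambda^{\ast}>|a|^{1/3}$, the nodal curves of $U_{\lambda^{\ast}}^{\pm}$ are given asymptotically by $F_{\lambda^{\ast}}^{\pm}(r)=r\pm(\lambda^{\ast})^{3}r^{-2}+o(r^{-2})$, while the Fermi base $\Gamma_a^+$ corresponds to $s=f_a(r)=r+ar^{-2}+o(r^{-2})$; since $a<(\lambda^{\ast})^{3}$, both barrier nodal sets lie at signed distance $O(l^{-2})$ from $\Gamma_a^+$ (in the region where Fermi coordinates are smooth). Because the nodal set of $u_d$ is sandwiched between $N_{U_{\lambda^{\ast}}^{-}}$ and $N_{U_{\lambda^{\ast}}^{+}}$ by (\ref{2.3nn}), its Fermi $t$-coordinate satisfies $|h_d(l)|\le C l^{-2}$ uniformly in $d$, with $C$ depending only on $\lambda^{\ast}$ and $a$.

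Second, I would substitute the decomposition (\ref{decom}) into the Allen-Cahn equation and expand in Fermi coordinates $(l,t)$ on the region $\{\varepsilon s<r<\varepsilon^{-1}s\}\setminus B_R$. Using that $\Gamma_a^+$ is minimal (so the mean curvature $H_{\Gamma}$ vanishes) and that the second fundamental form satisfies $|A|^2=O(l^{-2})$, the equation for $\phi_d$ takes the schematic form
\[
-\partial_{tt}\phi_d+(3\bar H^2-1)\phi_d=-|A|^{2}\,t\,\bar H'+\mathcal{R}(l,t,\phi_d,h_d,\nabla h_d,\nabla^2 h_d),
\]
where the remainder $\mathcal{R}$ collects tangential derivatives, cut-off errors, and nonlinear terms, all of which are $O(l^{-3})$ once $|h_d|=O(l^{-2})$ is used. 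The orthogonality condition $\int\phi_d\rho\bar H'\,dt=0$ makes the one-dimensional operator $-\partial_{tt}+(3\bar H^2-1)$ invertible with bounded inverse on each slice, producing $|\phi_d|\le C l^{-2}$ after a standard fixed-point / linear-theory argument; consistency with the $O(l^{-2})$ pointwise sandwich from the barriers prevents blow-up constants from creeping in.

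Third, to control $h_d'$ and $h_d''$, I would project the Allen-Cahn equation against $\rho\bar H'$ in the $t$ variable. The orthogonality condition annihilates the linear $\phi_d$ contribution at leading order and the coercive $t\bar H'$ source (from the definition of $\eta$) drops out, leaving a Jacobi-type reduced equation on $\Gamma_a^+$ of the form
\[
-\Delta_{\Gamma_a^+}h_d-|A|^{2}h_d=g(l),\qquad g(l)=O(l^{-4}),
\]
modulo contributions from $\phi_d$ which are already known to be $O(l^{-2})$. Under the $O(4)\times O(4)$ symmetry this is effectively a second-order ODE in $l$ with coefficients decaying like $l^{-2}$; combined with the pointwise bound on $h_d$, standard interior elliptic/ODE estimates yield $|h_d'|+|h_d''|\le C l^{-2}$ with $C$ independent of $d$.

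The main obstacle is the degeneracy of the Fermi coordinate near the axes $r=0$ and $s=0$, which forces the cut-off $\rho$ into the picture and produces error terms whose support meets the boundary of the Fermi region. Showing that these cut-off errors do not pollute the reduced equation for $h_d$, and that the Jacobi operator on $\Gamma_a^+$ admits uniform-in-$d$ bounded inverses in the relevant weighted norms, is the technical heart of the lemma; once that is set up, the three-stage scheme above closes in a finite number of iterations.
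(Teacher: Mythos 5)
There is a genuine gap, and it is a circularity at the heart of your stages 2--3. In stage 2 you claim that, once $|h_d|=O(l^{-2})$ is known, all remainder terms in the slice equation for $\phi_d$ are $O(l^{-3})$; but that remainder contains $h_d''\bar H'$ and $l^{-1}h_d'\bar H'$ (and $O(l^{-2})h_d'$, $O(l^{-1})h_d''$ corrections), and at that point you have no bound whatsoever on $h_d'$ or $h_d''$ --- those are exactly what stage 3 is supposed to produce, while stage 3 in turn presumes the $\phi_d$ control from stage 2 and, worse, a reduced equation $-\Delta_{\Gamma_a^+}h_d-|A|^2h_d=g$ with $g=O(l^{-4})$, which cannot be extracted knowing only $\phi_d=O(l^{-2})$ (reaching that accuracy is precisely the content of the later Proposition \ref{Es3}, and it needs the $\eta(t)|A|^2$ correction and the refined $\bar\phi$ estimates, which are not available at the level of this lemma). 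Your bootstrap therefore has no starting point for the derivative bounds. There is also a smaller slip in stage 1: $h_d$ is not defined as the Fermi height of the nodal set of $u_d$ but through the orthogonality condition $\int_{\mathbb{R}}\phi_d\,\rho\bar H'\,dt=0$, so sandwiching nodal sets does not by itself bound $h_d$; the correct and stronger input is the pointwise trapping $U_{\lambda^\ast}^-\le u_d\le U_{\lambda^\ast}^+$ together with $|U_{\lambda^\ast}^{\pm}-H(t)|\le Cl^{-2}$, which gives $|u_d-H(t)|\le Cl^{-2}$ everywhere in the Fermi region.

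The missing idea that makes the lemma elementary (and is how the paper argues) is to work directly with the orthogonality identity rather than with the PDE. From $\int_{\mathbb{R}}(u_d-\bar H)\rho\bar H'\,dt=0$ and the pointwise bound $|u_d-H(t)|\le Cl^{-2}$ one reads off $|h_d|\le Cl^{-2}$ (the left side equals $h_d\int\bar H'^2+o(h_d)$ up to the controlled quantity $\int(u_d-H)\rho\bar H'$), and then $|\phi_d|=|u_d-\bar H|\le|u_d-H|+|H-\bar H|\le Cl^{-2}$ with no linear theory at all. For the derivatives, differentiate the same identity once and twice in $l$: the resulting relations bound $h_d'$ and $h_d''$ by $\int\partial_l u_d\,\rho\bar H'\,dt$ and $\int\partial_l^2 u_d\,\rho\bar H'\,dt$ plus $O(l^{-2})$ terms, and interior elliptic (Schauder) estimates applied to the trapped solution $u_d$ on unit balls give $\partial_l u_d,\ \partial_l^2u_d=O(l^{-2})$, since $u_d-H(t)=O(l^{-2})$ and $H(t)$ solves the equation up to an $O(l^{-2})$ error in Fermi coordinates. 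This yields $|h_d'|+|h_d''|\le Cl^{-2}$ uniformly in $d$, which is all the lemma asserts; the Jacobi-type reduction you sketch belongs to the subsequent, finer step and should not be invoked here.
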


\begin{proof}
We first prove $\left\vert h_{d}\right\vert \leq Cl^{-2}.$ By the orthogonal
condition,
\begin{equation}
\int_{\mathbb{R}}\left(  u_{d}-\bar{H}\right)  \rho\bar{H}^{\prime}\left(
t\right)  dt=0. \label{or}%
\end{equation}
Hence%
\[
\int_{\mathbb{R}}\left[  u_{d}-H\left(  t\right)  \right]  \rho\bar{H}%
^{\prime}dt=\int_{\mathbb{R}}\left[  \bar{H}-H\left(  t\right)  \right]
\rho\bar{H}^{\prime}dt=h_{d}\int_{\mathbb{R}}\bar{H}^{\prime2}dt+o\left(
h_{d}\right)  .
\]
Since
\[
U_{\lambda^{\ast}}^{-}\leq u_{d}\leq U_{\lambda^{\ast}}^{+},
\]
and
\[
\left\vert U_{\lambda^{\ast}}^{\pm}-H\left(  t\right)  \right\vert \leq
Cl^{-2},
\]
then it holds that
\[
\left\vert h_{d}\right\vert \leq C\frac{\left\vert \int_{\mathbb{R}}\left[
u_{d}-H\left(  t\right)  \right]  \rho\bar{H}^{\prime}dt\right\vert }%
{\int_{\mathbb{R}}\bar{H}^{\prime2}dt}\leq Cl^{-2}.
\]
This also implies that
\[
\left\vert \phi_{d}\right\vert =\left\vert u_{d}-\bar{H}\right\vert \leq
Cl^{-2}.
\]

Next we show $\left\vert h_{d}^{\prime}\right\vert \leq Cl^{-2}.$ To see this,
we differentiate equation $\left(  \ref{or}\right)  $ with respect to $l.$
This yields
\[
\int_{\mathbb{R}}\left(  \partial_{l}u_{d}-\partial_{l}\bar{H}\right)
\rho\bar{H}^{\prime}\left(  t\right)  dt=-\int_{\mathbb{R}}\left(  u_{d}%
-\bar{H}\right)  \left[  \partial_{l}\rho\bar{H}^{\prime}\left(  t\right)
+\rho\partial_{l}\bar{H}^{\prime}\left(  t\right)  \right]  dt.
\]
As a consequence,
\begin{equation}
\left\vert h_{d}^{\prime}\int_{\mathbb{R}}\rho\bar{H}^{\prime2}\left(
t\right)  dt\right\vert \leq C\left\vert \int\partial_{l}u_{d}\rho\bar
{H}^{\prime}dt\right\vert +O\left(  l^{-2}\right)  . \label{hprime}%
\end{equation}
Observe that $u_{d}$ is a solution trapped between $U_{\lambda^{\ast}}^{+}$
and $U_{\lambda^{\ast}}^{-}.$ Hence elliptic regularity tells us $\partial
_{l}u_{d}=O\left(  l^{-2}\right)  .$ This together with $\left(
\ref{hprime}\right)  $ yield $\left\vert h_{d}^{\prime}\right\vert \leq
Cl^{-2}.$ Similarly, we can prove that $\left\vert h_{d}^{\prime\prime
}\right\vert \leq Cl^{-2}.$
\end{proof}

The Laplacian operator $\Delta$ has the following expansion in the Fermi
coordinate $\left(  l,t\right)  $:%
\[
\Delta=\Delta_{\Gamma^{t}}+\partial_{t}^{2}-M_{t}\partial_{t}.
\]
Here $M_{t}$ is the mean curvature of the surface
\[
\Gamma^{t}:=\left\{  X+t\nu\left(  X\right)  ,X\in\Gamma_{a}^{+}\right\}  .
\]
We use $g_{i,j}^{t}$ to denote the induced metric on the surface $\Gamma^{t}.$
Then
\begin{equation}
\Delta_{\Gamma^{t}}\varphi=\frac{\partial_{i}\left(  g^{i,j,t}\sqrt{\left\vert
g^{t}\right\vert }\partial_{j}\varphi\right)  }{\sqrt{\left\vert
g^{t}\right\vert }}. \label{Laplacian}%
\end{equation}
Here we have used $\left\vert g^{t}\right\vert $ to denote the determinant of
the metric tensor. For $t=0,$ $g_{i,j}^{0}:=g_{ij}$ is close to the metric on
the Simons cone, which has the form
\[
dl^{2}+l^{2}ds^{2},
\]
where $ds^{2}$ is the metric on $S^{3}\times S^{3}.$ In general, when
$t\neq0,$ the metric on $\Gamma^{t}$ and $\Gamma^{0}$ is in $\left(
l,t\right)  $ coordinate is related by%
\[
g^{t}=g^{0}\left(  I-tA\right)  ^{2}.
\]
In particular,
\[
g_{i,j}^{t}=g_{i,j}+O\left(  l^{-1}\right)  .
\]

\begin{lemma}
\label{La}The Laplacian-Beltrami operator $\Delta_{\Gamma^{t}}$ on the
hypersurface $\Gamma^{t}$ has the form
\[
\Delta_{\Gamma^{t}}\varphi=\Delta_{\Gamma^{0}}\varphi+P_{1}\left(  l,t\right)
D\varphi+P_{2}\left(  l,t\right)  D^{2}\varphi,
\]
where
\[
\left\vert P_{1}\left(  l,t\right)  \right\vert \leq Cl^{-2},\left\vert
P_{2}\left(  l,t\right)  \right\vert \leq Cl^{-1}.
\]

\end{lemma}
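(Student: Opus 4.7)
The plan is to exploit the canonical relation between the induced metrics on parallel hypersurfaces, $g^{t}=g^{0}(I-tA)^{2}$, where $A$ is the shape operator of $\Gamma^{0}=\Gamma_{a}^{+}$, together with the decay of $A$ coming from the fact that $\Gamma_{a}^{+}$ is asymptotic to the Simons cone. The first step is to record the curvature estimates on $\Gamma_{a}^{+}$: from the asymptotic expansion $\Gamma^{+}=\{X+[r^{-2}+O(r^{-3})]\nu(X):X\in C_{4,4}\}$ and the scale invariance of $C_{4,4}$, one gets $\|A\|\leq Cl^{-1}$ and $|\nabla A|\leq Cl^{-2}$ in the region where the Fermi coordinate is smoothly defined.

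Inverting the metric relation, for $t$ in any fixed bounded range, yields
\[
g^{t,ij}-g^{0,ij}=2tA^{ij}+t^{2}\,(\text{quadratic in }A)=O(l^{-1}).
\]
Since the coefficient of the second-order part of $\Delta_{\Gamma^{t}}$ in local coordinates is precisely $g^{t,ij}$, this identifies the matrix $P_{2}$ and gives $|P_{2}|\leq Cl^{-1}$. To control $P_{1}$, I would read off the first-order coefficient from (\ref{Laplacian}). Differentiating the expansion above produces a term proportional to $\nabla A$, hence of order $l^{-2}$. The remaining contribution comes from the volume density ratio $\sqrt{|g^{t}|}/\sqrt{|g^{0}|}=\det(I-tA)$. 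Here the minimality of $\Gamma_{a}^{+}$ is essential: $\operatorname{tr}(A)=0$ kills the linear-in-$t$ term, so
\[
\sqrt{|g^{t}|}/\sqrt{|g^{0}|}=1+O(|tA|^{2})=1+O(l^{-2}),
\]
and its logarithmic derivative is $O(l^{-3})$. Combined with the cone-metric estimate $\partial_{i}\ln\sqrt{|g^{0}|}=O(l^{-1})$ and the already-established bound $g^{t,ij}-g^{0,ij}=O(l^{-1})$, every contribution to $P_{1}$ is at most $Cl^{-2}$.

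The main obstacle is not the expansion itself but justifying the input curvature estimates. One must use both the $r^{-2}$ decay rate in the graph representation of $\Gamma_{a}^{+}$ over $C_{4,4}$ (which gives smoothness of the Fermi coordinate in $\{\varepsilon s<r<\varepsilon^{-1}s\}$ and a quantitative bound on $A$) and the scale invariance of the Simons cone (which supplies the baseline decay $|A_{\text{cone}}|=O(1/l)$ and $|\nabla A_{\text{cone}}|=O(1/l^{2})$). The differentiated bound on $A$ in particular requires passing a derivative through the $O(r^{-3})$ correction in the graph expansion, which is where Schauder estimates on the minimal surface equation enter; once these estimates are in place, the rest is routine bookkeeping in the coordinate formula.
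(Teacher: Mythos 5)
Your proposal follows essentially the same route as the paper: expand $\Delta_{\Gamma^{t}}$ via the divergence-form formula (\ref{Laplacian}), compare with $\Delta_{\Gamma^{0}}$ term by term, and use $g^{t}=g^{0}\left(  I-tA\right)  ^{2}$ together with the decay $\left\vert A\right\vert =O\left(  l^{-1}\right)  $, $\left\vert \nabla A\right\vert =O\left(  l^{-2}\right)  $ to bound the first- and second-order coefficients. Your bookkeeping (including the curvature decay input and the density-ratio term, where minimality is a nice but inessential simplification since $t\,\partial_{i}\operatorname{tr}A=O\left(  l^{-2}\right)  $ already suffices) is correct and simply fills in the details the paper leaves implicit.
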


\begin{proof}
Using $\left(  \ref{Laplacian}\right)  ,$ we get
\[
\Delta_{\Gamma^{t}}\varphi=\frac{\partial_{i}\left(  g^{i,j,t}\sqrt{\left\vert
g^{t}\right\vert }\partial_{j}\varphi\right)  }{\sqrt{\left\vert
g^{t}\right\vert }}=\partial_{i}g^{i,j,t}\partial_{j}\varphi+g^{i,j,t}%
\partial_{ij}\varphi+g^{i,j,t}\partial_{j}\varphi\partial_{i}\ln
\sqrt{\left\vert g^{t}\right\vert }.
\]
We compute
\begin{align*}
\Delta_{\Gamma^{t}}\varphi-\Delta_{\Gamma^{0}}\varphi &  =\partial_{i}\left(
g^{i,j,t}-g^{i,j}\right)  \partial_{j}\varphi+\left(  g^{i,j,t}-g^{i,j}%
\right)  \partial_{ij}\varphi\\
&  +g^{i,j,t}\partial_{j}\varphi\partial_{i}\ln\frac{\sqrt{\left\vert
g^{t}\right\vert }}{\sqrt{\left\vert g\right\vert }}+\left(  g^{i,j,t}%
-g^{i,j}\right)  \partial_{j}\varphi\partial_{i}\ln\sqrt{\left\vert
g\right\vert }.
\end{align*}
The estimate follows from this formula.
\end{proof}

One main step of our analysis will be the estimate of the approximate solution.

\begin{lemma}
\label{error}The error of the approximate solution $\bar{H}$ has the following
estimate:
\begin{align*}
\Delta\bar{H}+\bar{H}-\bar{H}^{3}  &  =\left(  h_{d}^{\prime\prime}+\frac
{6}{l}h_{d}^{\prime}\right)  \bar{H}^{\prime}+O\left(  h_{d}^{\prime2}\right)
+O\left(  l^{-2}\right)  h_{d}^{\prime}+O\left(  l^{-1}\right)  h_{d}%
^{\prime\prime}\\
&  +\left(  t\left\vert A\right\vert ^{2}+t^{3}%
{\displaystyle\sum_{i=1}^{7}}
k_{i}^{4}\right)  \bar{H}^{\prime}+O\left(  l^{-5}\right)  .
\end{align*}

\end{lemma}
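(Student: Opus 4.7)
The plan is direct substitution followed by careful bookkeeping. In the region $\{\rho = 1\}$ the ansatz reduces to $\bar H = H(t - h_d(l))$, and the Fermi-coordinate decomposition
\[
\Delta = \Delta_{\Gamma^t} + \partial_t^2 - M_t \partial_t
\]
combined with the one-dimensional ODE $H''(\tau) + H(\tau) - H(\tau)^3 = 0$ (with $\tau = t - h_d(l)$) annihilates the piece $\partial_t^2 \bar H + \bar H - \bar H^3$. Thus the entire residual equals $\Delta_{\Gamma^t}\bar H - M_t \bar H'$, and the lemma becomes an asymptotic expansion of this quantity in powers of $t$ and of $h_d$-derivatives, modulo $O(l^{-5})$.

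For the first summand I would invoke Lemma \ref{La} to replace $\Delta_{\Gamma^t}$ by $\Delta_{\Gamma^0}$ at the cost of an error of the form $P_1 D\bar H + P_2 D^2\bar H$ with $|P_1| = O(l^{-2})$ and $|P_2| = O(l^{-1})$; together with Lemma \ref{De1} this contributes $O(l^{-2}) h_d' + O(l^{-1}) h_d''$. Since $\bar H$ depends on the $\Gamma^0$-variables only through $l$, the chain rule gives
\[
\partial_l \bar H = -h_d'\, \bar H',\qquad \partial_l^2 \bar H = -h_d''\, \bar H' + (h_d')^2 H''(\tau),
\]
and because $\Gamma_a^+$ is asymptotic to the $7$-dimensional Simons cone $C_{4,4}$ (on which the radial Laplace--Beltrami operator is $\partial_l^2 + \frac{6}{l}\partial_l$), the leading contribution of $\Delta_{\Gamma^0}\bar H$ is $\bigl(h_d'' + \tfrac{6}{l}h_d'\bigr)\bar H'$ up to a sign fixed by the normal convention, plus $O((h_d')^2)$ from the $H''(\tau)$ piece and $O(l^{-3}) h_d'$ from the deviation $\Gamma^0 \neq C_{4,4}$. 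All of these fit the stated remainder.

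For the mean curvature term I would Taylor-expand
\[
M_t = \sum_{i=1}^{7}\frac{k_i}{1-tk_i} = \sum_i k_i + t\sum_i k_i^2 + t^2\sum_i k_i^3 + t^3 \sum_i k_i^4 + O(t^4 l^{-5}),
\]
where $k_1,\ldots,k_7$ are the principal curvatures of $\Gamma^0 = \Gamma_a^+$. Minimality yields $\sum k_i = 0$ and $\sum k_i^2 = |A|^2$. The key additional input is $\sum k_i^3 = O(l^{-5})$: on the Simons cone itself the seven principal curvatures split into one vanishing radial curvature together with three pairs of $\pm c/l$ coming from the $|x|\leftrightarrow|y|$ symmetry (which flips the normal $\nu$ and hence the sign of each paired principal curvature), so $\sum k_i^3 \equiv 0$ on $C_{4,4}$; on $\Gamma_a^+$ the $O(r^{-2})$ normal-graph deviation from the cone perturbs the shape operator by $O(l^{-3})$, producing a correction to $\sum k_i^3$ of order $(l^{-1})^2\cdot l^{-3} = l^{-5}$. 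Multiplying $-M_t$ by $\bar H'$ and combining with the previous step gives the asserted formula.

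The main obstacle is precisely the estimate $\sum k_i^3 = O(l^{-5})$: one must parametrize $\Gamma_a^+$ as a normal graph $X + \psi(X)\nu_C(X)$ over $C_{4,4}$ with $\psi = O(r^{-2})$, expand the shape operator to first order in $\psi, \nabla\psi, \nabla^2\psi$, and use the vanishing $\sum k_i^3|_{\mathrm{cone}} = 0$ to kill the leading term. A secondary but routine issue is the transition region of the cutoff $\rho$, which lies near the $r$- and $s$-axes where $|t|$ is comparable to $l$; there $|H(\tau)|$ differs from $1$ by $O(e^{-c l})$, so $\bar H - H(\tau)$ and its derivatives contribute only exponentially small errors that are absorbed into $O(l^{-5})$.
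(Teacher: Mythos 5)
Your proposal is correct and follows the same skeleton as the paper: reduce to $\Delta_{\Gamma^t}\bar H - M_t\partial_t\bar H$ using the one-dimensional equation, control $\Delta_{\Gamma^t}-\Delta_{\Gamma^0}$ by Lemma \ref{La} together with the bounds of Lemma \ref{De1}, expand $M_t=\sum_i k_i/(1-tk_i)$ and use minimality, with the crux being that $\sum_{i=1}^7 k_i^3$ decays like $O(l^{-5})$ rather than the naive $O(l^{-3})$. The only genuine divergence is how that crux is established. The paper quotes the explicit principal-curvature formulas for an $O(4)\times O(4)$-invariant hypersurface in terms of the profile curve and plugs in $s=r+ar^{-2}+O(r^{-3})$, obtaining $k_1=O(r^{-4})$ and $|k_2|=\dots=|k_7|=\tfrac{1}{\sqrt2\,r}+O(r^{-4})$, whence the cubes cancel to the stated order; you instead observe that $\sum_i k_i^3\equiv 0$ exactly on $C_{4,4}$ (the curvatures pair as $\pm$ with one zero) and estimate the correction coming from writing $\Gamma_a^+$ as a normal graph $\psi=O(r^{-2})$ over the cone. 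Both routes exploit the same approximate $r\leftrightarrow s$ symmetry; the paper's is more explicit and essentially computation-free once the formulas are cited, while yours is more conceptual but requires the shape-operator perturbation estimate you flag as the main obstacle — note that this step is actually easier than you suggest, since the perturbation is controlled by $\nabla^2\psi$ and $|A|^2\psi$, i.e.\ $O(l^{-4})$, which is better than the $O(l^{-3})$ you need (and correctly observe suffices via $3\sum k_i^2\,\delta k_i=O(l^{-5})$). Also be aware that your cancellation argument is special to the symmetric cone: for $C_{i,j}$ with $i\neq j$ one has $A_3\sim l^{-3}\neq0$, which is exactly why Section 3 of the paper produces the extra $c_{i,j}r^{-1}$ term; your proof, like the paper's, is therefore genuinely tied to $C_{4,4}$ here, which is fine for this lemma. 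Your parenthetical ``sign fixed by the normal convention'' is a small imprecision — the minus sign in $-(h_d''+\tfrac6l h_d')\bar H'$ comes from the chain rule applied to $H(t-h_d(l))$, not from the choice of $\nu$ — but since the paper's own lemma statement and proof already disagree on this sign and it plays no role in the later projection argument, this is immaterial.
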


\begin{proof}
Computing the Laplacian in the Fermi coordinate, we obtain, up to an
exponential decay term introduced by the cutoff function $\rho,$
\begin{align*}
\Delta\bar{H}+\bar{H}-\bar{H}^{3}  &  =\Delta_{\Gamma^{t}}\bar{H}+\partial
_{t}^{2}\bar{H}-M_{t}\partial_{t}\bar{H}+\bar{H}-\bar{H}^{3}\\
&  =\Delta_{\Gamma^{t}}\bar{H}-M_{t}\partial_{t}\bar{H}.
\end{align*}
Let $k_{i}$ be the principle curvatures of $\Gamma_{a}^{+}$. Since $\Gamma
_{a}^{+}$ is a minimal surface,
\begin{align*}
M_{t}  &  =%
{\displaystyle\sum_{i=1}^{7}}
\frac{k_{i}}{1-tk_{i}}\\
&  =t\left\vert A\right\vert ^{2}+t^{2}%
{\displaystyle\sum_{i=1}^{7}}
k_{i}^{3}+t^{3}%
{\displaystyle\sum_{i=1}^{7}}
k_{i}^{4}+O\left(  k_{i}^{5}\right)  .
\end{align*}
Observe that $k_{i}^{3}$ decays like $O\left(  r^{-3}\right)  $ at infinity.
However, we would like to show that $%
{\displaystyle\sum_{i=1}^{7}}
k_{i}^{3}$ actually has a faster decay. Indeed, recall that(see for example
\cite{ALen}), in a parametrization of the curve $\Gamma_{a}^{+},$
\begin{align*}
k_{1}  &  =\frac{-r^{\prime\prime}s^{\prime}+r^{\prime}s^{\prime\prime}%
}{\left(  r^{\prime2}+s^{\prime2}\right)  ^{\frac{3}{2}}},\\
k_{2}  &  =k_{3}=k_{4}=\frac{s^{\prime}}{r\sqrt{r^{\prime2}+s^{\prime2}}},\\
k_{5}  &  =k_{6}=k_{7}=\frac{-r^{\prime}}{s\sqrt{r^{\prime2}+s^{\prime2}}}.
\end{align*}
In particular, using the fact that along $\Gamma_{a}^{+},$ $s=r+ar^{-2}%
+O\left(  r^{-3}\right)  ,$
\[
k_{1}=O\left(  r^{-4}\right)  ,\text{ and }\left\vert k_{2}\right\vert
=...=\left\vert k_{7}\right\vert =\frac{1}{\sqrt{2}r}+O\left(  r^{-4}\right)
.
\]
Therefore we obtain
\[%
{\displaystyle\sum_{i=1}^{7}}
k_{i}^{3}=O\left(  r^{-5}\right)  .
\]
It follows that,%
\[
M_{t}=t\left\vert A\right\vert ^{2}+t^{3}%
{\displaystyle\sum_{i=1}^{7}}
k_{i}^{4}+O\left(  r^{-5}\right)  .
\]

Next we compute $\Delta_{\Gamma^{t}}\bar{H}.$ By Lemma \ref{La}, in the Fermi
coordinate,
\begin{align*}
\Delta_{\Gamma^{t}}\bar{H}  &  =\frac{\partial_{i}\left(  g^{i,j,t}%
\sqrt{\left\vert g^{t}\right\vert }\partial_{j}\left[  H\left(  t-h_{d}\left(
l\right)  \right)  \right]  \right)  }{\sqrt{\left\vert g^{t}\right\vert }}\\
&  =\Delta_{\Gamma^{0}}\bar{H}+O\left(  l^{-2}\right)  h_{d}^{\prime}+O\left(
l^{-1}\right)  h_{d}^{\prime\prime}\\
&  =-\left(  h_{d}^{\prime\prime}+\frac{6}{l}h_{d}^{\prime}\right)  \bar
{H}^{\prime}+O\left(  h_{d}^{\prime2}\right)  +O\left(  l^{-2}\right)
h_{d}^{\prime}+O\left(  l^{-1}\right)  h_{d}^{\prime\prime}+O\left(
l^{-5}\right)  .
\end{align*}
Hence
\begin{align*}
\Delta\bar{H}+\bar{H}-\bar{H}^{3}  &  =-\left(  h_{d}^{\prime\prime}+\frac
{6}{l}h_{d}^{\prime}\right)  \bar{H}^{\prime}+O\left(  h_{d}^{\prime2}\right)
+O\left(  l^{-2}\right)  h_{d}^{\prime}+O\left(  l^{-1}\right)  h_{d}%
^{\prime\prime}\\
&  -\left(  t\left\vert A\right\vert ^{2}+t^{3}%
{\displaystyle\sum_{i=1}^{7}}
k_{i}^{4}\right)  \bar{H}^{\prime}+O\left(  l^{-5}\right)  .
\end{align*}

\end{proof}

Let us set
\[
J\left(  h_{d}\right)  =h_{d}^{\prime\prime}+\frac{6}{l}h_{d}^{\prime}%
+\frac{6}{l^{2}}h_{d}.
\]
With all these understood, we are ready to prove the following

\begin{proposition}
\label{Es3}The function $h_{d}$ satisfies
\[
\left\vert J\left(  h_{d}\right)  \right\vert \leq Cl^{-5},
\]
where $C$ is a constant independent of $d.$
\end{proposition}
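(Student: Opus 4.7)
The plan is a Lyapunov--Schmidt projection. Writing $-\Delta u_d = u_d - u_d^3$ with $u_d = \bar{H}+\phi_d$ yields
\[
-\Delta \phi_d + (3\bar{H}^2-1)\phi_d \;=\; E \;-\; 3\bar{H}\phi_d^2 - \phi_d^3,
\]
where $E := \Delta\bar{H}+\bar{H}-\bar{H}^3$ is the error expanded in Lemma~\ref{error}. I multiply this by the cutoff $\rho\bar{H}'$ and integrate over each fiber normal to $\Gamma_a^+$, i.e., in the variable $t$. The orthogonality constraint $\int \phi_d\rho\bar{H}'\,dt = 0$ suppresses the kernel component of $\phi_d$, so what remains is an ODE in $l$ for $h_d$; the task is to identify its principal part with $-J(h_d)\int (H')^2\,ds$ and to absorb everything else into $O(l^{-5})$.

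For the source term, Lemma~\ref{error} gives $E = -(h_d''+\tfrac{6}{l}h_d')\bar{H}' - (t|A|^2 + t^3\sum k_i^4)\bar{H}' + (\text{smaller})$. After the substitution $s = t-h_d$ and using that $H'$ is even, one obtains $\int t\,(\bar{H}')^2\,dt = h_d\int (H')^2\,ds$. The explicit curvature formulas in the proof of Lemma~\ref{error}, namely $|k_i| = \tfrac{1}{\sqrt{2}\,r} + O(r^{-4})$ for $i=2,\dots,7$ and $k_1 = O(r^{-4})$, combined with $l = r\sqrt{2} + O(r^{-2})$ along $\Gamma_a^+$, give $|A|^2 = \tfrac{6}{l^2} + O(l^{-5})$. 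Hence the dominant contribution reads
\[
\int E\,\rho\bar{H}'\,dt \;=\; -J(h_d)\int (H')^2\,ds \;+\; O(l^{-5}).
\]
The leftover pieces in $E$ either fit directly into $O(l^{-5})$ by Lemma~\ref{De1}, or are odd in the fiber variable and vanish to leading order when paired with the even weight $(\bar{H}')^2$.

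For the left-hand side, I split the linearized operator as $-\partial_t^2 + (3\bar{H}^2-1) + M_t\partial_t - \Delta_{\Gamma^t}$. The first piece, paired with $\rho\bar{H}'$ and integrated in $t$, vanishes up to exponentially small boundary contributions, since $L_0\bar{H}' = 0$ and $\rho$ is effectively $t$-independent wherever $\bar{H}'$ is not already exponentially small. The tangential piece $\Delta_{\Gamma^t}\phi_d$ is reduced to $\Delta_{\Gamma^0}\phi_d$ by Lemma~\ref{La}; integration by parts on the surface moves $\Delta_{\Gamma^0}$ onto $\rho\bar{H}'$, where $\partial_l\bar{H}' = -h_d'\bar{H}''$ brings in $h_d'$ and $h_d''$ terms, and the orthogonality condition (differentiated once and twice in $l$, combined with elliptic regularity on $\phi_d$ and Lemma~\ref{De1}) bounds the result by $Cl^{-5}$. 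The mean-curvature term $\int M_t\partial_t\phi_d\,\rho\bar{H}'\,dt$ is handled by integration by parts in $t$, using $M_t = O(l^{-1})$.

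The main obstacle is the nonlinear contribution $\int(3\bar{H}\phi_d^2+\phi_d^3)\rho\bar{H}'\,dt$: a naive bound via $|\phi_d|\leq Cl^{-2}$ only yields $O(l^{-4})$. To recover $O(l^{-5})$ I would exploit parity in $s = t - h_d$. The boundary ansatz $H_d^* + \rho\eta(t)|A|^2$ together with the fact that $\eta$ is odd in $s$ (its defining equation $-\eta''+(3H^2-1)\eta = -tH'$ has an even operator and an odd source) forces the dominant $O(l^{-2})$ part of $\phi_d$ to be odd in $s$, while the even part of $\phi_d$ inherits strictly better decay. Since $\bar{H}\bar{H}'$ is odd in $s$, the leading $O(l^{-4})$ piece of $\phi_d^2$ (which is even) is killed by the fiber integral, and only the odd cross-term of size $O(l^{-5})$ survives. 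Establishing this parity decomposition and the improved decay of the even part of $\phi_d$ is the technical heart of the argument; once in place, dividing by $\int (H')^2\,ds$ delivers $|J(h_d)| \leq Cl^{-5}$.
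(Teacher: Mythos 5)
Your setup coincides with the paper's: write $u_d=\bar H+\phi_d$, use Lemma \ref{error}, project the equation onto $\bar H'$ fiberwise, and extract $J(h_d)$ from the combination of $(h_d''+\tfrac6l h_d')$ with the $t|A|^2$ term via $t=(t-h_d)+h_d$ and $|A|^2=6l^{-2}+O(l^{-5})$. But the proof does not close, and the place where it stalls is exactly the step you defer. The statement that the even (in $t-h_d$) part of $\phi_d$ decays strictly faster than $l^{-2}$ is not something you can extract from the boundary ansatz alone; it is the content one must prove. The paper proves it by writing $\phi_d=\rho\,\eta(t-h_d)|A|^2+\bar\phi$ with the \emph{explicit} corrector $\eta(t-h_d)|A|^2$ (this is why the Dirichlet datum on $L_d$ was chosen to be $H_d^*+\rho\eta(t)|A|^2$, so that $\bar\phi$ is exponentially small on $L_d$ and the decay estimate from the linear theory for $-\Delta+(3H^2-1)$ is uniform in $d$), obtaining first $|\bar\phi|\le Cl^{-3}$, then bootstrapping through equation (\ref{h}) to get $|h_d'|\le Cl^{-3}$, $|h_d''|\le Cl^{-4}$, then $|\bar\phi|\le Cl^{-4}$, and only after this chain does $J(h_d)=O(l^{-5})$ follow. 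Your parity heuristic is a plausible guess at the mechanism (indeed $\eta$ is odd), but without constructing the corrector and running the linear theory you have no bound on the even part better than the $Cl^{-2}$ of Lemma \ref{De1}, so the nonlinear term is only controlled at $O(l^{-4})$.

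Moreover, even granting the parity claim, several terms with no parity structure are not $O(l^{-5})$ under Lemma \ref{De1} alone, contrary to your assertion that they ``fit directly'': the remainders in Lemma \ref{error} are $O(l^{-1})h_d''=O(l^{-3})$, $O(l^{-2})h_d'=O(l^{-4})$ and $O(h_d'^2)=O(l^{-4})$; and your treatment of $\int\Delta_{\Gamma^t}\phi_d\,\rho\bar H'\,dt$ via the differentiated orthogonality produces terms of the type $h_d'\int\partial_l\phi_d\,\bar H''\,dt$ and $\int\phi_d\,(h_d''\bar H''+h_d'^2\bar H''')\,dt$, which with $|\phi_d|,|\partial_l\phi_d|,|h_d'|,|h_d''|\le Cl^{-2}$ are only $O(l^{-4})$, one power short of what you claim. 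All of these become $O(l^{-5})$ only after the improved bounds $|h_d'|\le Cl^{-3}$, $|h_d''|\le Cl^{-4}$ and $|\bar\phi|\le Cl^{-4}$ are available, i.e.\ after the two-step bootstrap that your proposal omits. So the proposal identifies the right principal term and the right obstacle, but the decisive estimates (corrector subtraction, uniform-in-$d$ linear theory on $\Omega_d$, and the bootstrap on $h_d',h_d'',\bar\phi$) are missing, and as written the argument only yields $|J(h_d)|\le Cl^{-3}$.
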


\begin{proof}
Frequently, we drop the subscript $d$ for notational simplicity.

Since $\phi+\bar{H}$ solves the Allen-Cahn equation, $\phi$ satisfies
\[
-\Delta\phi+\left(  3\bar{H}^{2}-1\right)  \phi=\Delta\bar{H}+\bar{H}-\bar
{H}^{3}-3\bar{H}\phi^{2}-\phi^{3}.
\]
By Lemma \ref{error},
\begin{align*}
&  -\Delta\phi+\left(  3\bar{H}^{2}-1\right)  \phi\\
&  =-J\left(  h\right)  \bar{H}^{\prime}-\left(  t-h\right)  \bar{H}^{\prime
}\left\vert A\right\vert ^{2}-\left(  t-h\right)  ^{3}\bar{H}^{\prime}%
{\displaystyle\sum_{i=1}^{7}}
k_{i}^{4}\\
&  +O\left(  h^{\prime2}\right)  +O\left(  l^{-2}h^{\prime}\right)  +O\left(
l^{-1}h^{\prime\prime}\right) \\
&  -3\phi^{2}\bar{H}-\phi^{3}+O\left(  l^{-5}\right)  .
\end{align*}
The function $\left(  t-h\right)  \left\vert A\right\vert ^{2}\bar{H}^{\prime
}$ is orthogonal to $\bar{H}^{\prime}$ and decays like $O\left(
l^{-2}\right)  .$ This is a slow decaying term. Recall that we defined a
function $\eta$ satisfying
\[
-\eta^{\prime\prime}+\left(  3H^{2}-1\right)  \eta=-tH^{\prime}\left(
t\right)  .
\]
We introduce $\bar{\eta}=\eta\left(  t-h\right)  .$ Straightforward
computation yields
\begin{align*}
&  -\Delta\left(  \bar{\eta}\left\vert A\right\vert ^{2}\right)  +\left(
3\bar{H}^{2}-1\right)  \bar{\eta}\left\vert A\right\vert ^{2}\\
&  =-\Delta_{\Gamma^{t}}\left(  \bar{\eta}\left\vert A\right\vert ^{2}\right)
-\partial_{t}^{2}\left(  \bar{\eta}\left\vert A\right\vert ^{2}\right)
+M_{t}\partial_{t}\left(  \bar{\eta}\left\vert A\right\vert ^{2}\right)
+\left(  3\bar{H}^{2}-1\right)  \bar{\eta}\left\vert A\right\vert ^{2}\\
&  =-\Delta_{\Gamma^{t}}\left(  \bar{\eta}\left\vert A\right\vert ^{2}\right)
+M_{t}\partial_{t}\left(  \bar{\eta}\left\vert A\right\vert ^{2}\right)
-\left(  t-h\right)  \bar{H}^{\prime}\left\vert A\right\vert ^{2}\\
&  =-\bar{\eta}\Delta_{\Gamma}\left\vert A\right\vert ^{2}+\left(  t-h\right)
\partial_{t}\bar{\eta}\left(  \left\vert A\right\vert ^{2}\right)
^{2}-\left(  t-h\right)  \bar{H}^{\prime}\left\vert A\right\vert ^{2}+O\left(
l^{-5}\right)  .
\end{align*}
Due to the fact that $\eta$ is odd, $-\bar{\eta}\Delta_{\Gamma}\left\vert
A\right\vert ^{2}+\left(  t-h\right)  \partial_{t}\bar{\eta}\left(  \left\vert
A\right\vert ^{2}\right)  ^{2}$ decays like $O\left(  l^{-4}\right)  $ but is
orthogonal to $\bar{H}^{\prime}.$

Let $\phi=\rho\bar{\eta}\left\vert A\right\vert ^{2}+\bar{\phi}.$ Then the new
function $\bar{\phi}$ should satisfy
\begin{align}
-\Delta\bar{\phi}+\left(  3\bar{H}^{2}-1\right)  \bar{\phi}  &  =-J\left(
h\right)  \bar{H}^{\prime}+\left(  t-h\right)  ^{3}H^{\prime}%
{\displaystyle\sum_{i=1}^{7}}
k_{i}^{4}\label{equfi}\\
&  +\bar{\eta}\Delta_{\Gamma}\left\vert A\right\vert ^{2}-\left(  t-h\right)
\partial_{t}\bar{\eta}\left(  \left\vert A\right\vert ^{2}\right)
^{2}\nonumber\\
&  +O\left(  h^{\prime2}\right)  +O\left(  l^{-2}\right)  h^{\prime}+O\left(
l^{-1}\right)  h^{\prime\prime}\nonumber\\
&  +3\left(  \bar{\eta}\left\vert A\right\vert ^{2}+\bar{\phi}\right)
^{2}\bar{H}+\left(  \bar{\eta}\left\vert A\right\vert ^{2}+\bar{\phi}\right)
^{3}+O\left(  l^{-5}\right)  .\nonumber
\end{align}
Denote the right hand by $E.$ Then multiplying both sides with $\bar
{H}^{\prime}$ and integrating in $t$, we get
\[
\int_{\mathbb{R}}E\bar{H}^{\prime}=o\left(  \bar{\phi}\right)  .
\]
>From this estimate, we get
\begin{equation}
-\Delta\bar{\phi}+\left(  3\bar{H}^{2}-1\right)  \bar{\phi}=E-\frac
{\int_{\mathbb{R}}E\bar{H}^{\prime}}{\int_{\mathbb{R}}\bar{H}^{\prime2}}%
\bar{H}^{\prime}+o\left(  \bar{\phi}\right)  . \label{E}%
\end{equation}
Note that
\begin{align*}
E-\frac{\int_{\mathbb{R}}E\bar{H}^{\prime}}{\int_{\mathbb{R}}\bar{H}^{\prime
2}}\bar{H}^{\prime}  &  =\left(  t-h\right)  ^{3}\bar{H}^{\prime}%
{\displaystyle\sum_{i=1}^{7}}
k_{i}^{4}\\
&  +\bar{\eta}\Delta_{\Gamma}\left\vert A\right\vert ^{2}-\left(  t-h\right)
\partial_{t}\bar{\eta}\left(  \left\vert A\right\vert ^{2}\right)  ^{2}\\
&  +O\left(  h^{\prime2}\right)  +O\left(  l^{-2}\right)  h^{\prime}+O\left(
l^{-1}\right)  h^{\prime\prime}\\
&  +3\bar{\eta}^{2}\left(  \left\vert A\right\vert ^{2}\right)  ^{2}\bar
{H}+o\left(  \bar{\phi}\right)  +O\left(  l^{-5}\right)  .
\end{align*}
By the estimate of $h,h^{\prime},h^{\prime\prime},$ we get the following
estimate(non optimal):
\[
E-\frac{\int_{\mathbb{R}}EH^{\prime}}{\int_{\mathbb{R}}H^{\prime2}}H^{\prime
}=O\left(  l^{-3}\right)  +o\left(  \bar{\phi}\right)  .
\]
Since $\bar{\phi}=0$ on $L_{d}$ (This follows from the boundary condition of
$u_{d}.$ Actually $\bar{\phi}$ is not identically zero on $L_{d},$ but of the
order $O\left(  e^{-d}\right)  $), we could use the well known estimate of the
linear theory for the operator
\[
-\Delta+\left(  3H^{2}-1\right)  ,
\]
to deduce that
\begin{equation}
\left\vert \bar{\phi}\right\vert +\left\vert D\phi\right\vert +\left\vert
D^{2}\phi\right\vert \leq Cl^{-3}, \label{fibar}%
\end{equation}
with $C$ independent of $d.$

Now multiplying both sides of $\left(  \ref{equfi}\right)  $ by $\bar
{H}^{\prime}$ again and integrating in $t$, using the estimate $\left(
\ref{fibar}\right)  $ of $\bar{\phi},$ we get
\begin{equation}
J\left(  h\right)  +O\left(  h^{\prime2}\right)  +O\left(  l^{-2}\right)
h^{\prime}+O\left(  l^{-1}h^{\prime\prime}\right)  =\int_{\mathbb{R}}\left[
-\Delta\bar{\phi}+\left(  3\bar{H}^{2}-1\right)  \bar{\phi}\right]  \bar
{H}^{\prime}dt+O\left(  l^{-5}\right)  . \label{Jacobi}%
\end{equation}
We compute,
\begin{align*}
\int_{\mathbb{R}}\left[  -\Delta\bar{\phi}+\left(  3\bar{H}^{2}-1\right)
\bar{\phi}\right]  \bar{H}^{\prime}dt  &  =\int_{\mathbb{R}}\left[
\Delta_{\Gamma^{t}}\bar{\phi}+\partial_{t}^{2}\bar{\phi}-M_{t}\partial_{t}%
\bar{\phi}+\left(  3\bar{H}^{2}-1\right)  \bar{\phi}\right]  \bar{H}^{\prime
}dt\\
&  =\int_{\mathbb{R}}\Delta_{\Gamma^{t}}\bar{\phi}\bar{H}^{\prime}dt+O\left(
l^{-5}\right)  .
\end{align*}
>From this equation, we get%
\begin{equation}
J\left(  h\right)  +O\left(  h^{\prime2}\right)  +O\left(  l^{-2}\right)
h^{\prime}+O\left(  l^{-1}\right)  h^{\prime\prime}=\int_{\mathbb{R}}%
\Delta_{\Gamma^{t}}\bar{\phi}\bar{H}^{\prime}dt+O\left(  l^{-5}\right)  .
\label{h}%
\end{equation}
Using the fact that $\left\vert h^{\prime}\right\vert ,\left\vert
h^{\prime\prime}\right\vert \leq Cl^{-2}$ and
\[
\int_{\mathbb{R}}\Delta_{\Gamma^{t}}\bar{\phi}\bar{H}^{\prime}dt=O\left(
l^{-4}\right)  ,
\]
We deduce from $\left(  \ref{h}\right)  $ that
\begin{equation}
\left\vert h^{\prime}\right\vert \leq Cl^{-3},\left\vert h^{\prime\prime
}\right\vert \leq Cl^{-4}. \label{h''}%
\end{equation}
Insert this estimate back into $\left(  \ref{E}\right)  ,$ we get an improved
estimate for $\bar{\phi}:$%
\[
\left\vert \bar{\phi}\right\vert \leq Cl^{-4}.
\]
It follows that%
\begin{equation}
J\left(  h\right)  =O\left(  l^{-5}\right)  . \label{hnew}%
\end{equation}
This finishes the proof.
\end{proof}

Next we would like to use Proposition \ref{Es3} to analyze the behavior of the
nodal curve of the limiting solution $U.$

\begin{lemma}
\label{b}There exists a constant $b\in\left[  -\lambda^{\ast},\lambda^{\ast
}\right]  $ such that
\[
F\left(  r\right)  =r+br^{-2}+O\left(  r^{-3}\right)  .
\]

\end{lemma}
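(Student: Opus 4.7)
The plan is to pass to the $d\to\infty$ limit in the Fermi-coordinate decomposition $u_{d}=\bar H(\cdot-h_{d})+\phi_{d}$, then analyze the ODE satisfied by the limiting height function. Because $u_{d}\to U$ in $C^{2}_{loc}$ and for each $l$ the decomposition is uniquely determined via the implicit function theorem by the orthogonality $\int\phi_{d}\,\rho\,\bar H'\,dt=0$, the height functions $h_{d}$ converge (along a subsequence, locally uniformly with two derivatives) to a limit $h(l)$ attached to $U$. The uniform-in-$d$ estimates $|h|+|h'|+|h''|\leq Cl^{-2}$ and $|J(h)|\leq Cl^{-5}$ from Lemma \ref{De1} and Proposition \ref{Es3} persist in the limit on some interval $[l_{0},\infty)$.

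I would then solve the ODE $J(h)=g$ with these bounds. The Euler operator $J=\partial_{l}^{2}+\frac{6}{l}\partial_{l}+\frac{6}{l^{2}}$ has indicial polynomial $(\alpha+2)(\alpha+3)$, so its homogeneous solutions are $l^{-2}$ and $l^{-3}$. Substituting $h=l^{-2}\psi$ converts the equation into $(l^{2}\psi')'=l^{4}g=O(l^{-1})$; one integration gives $l^{2}\psi'(l)=C_{0}+O(\log l)$, hence $\psi'\in L^{1}$ near infinity, $\beta:=\lim_{l\to\infty}\psi(l)$ exists, and
\[
h(l)=\beta l^{-2}+O(l^{-3}\log l).
\]
The logarithm is borderline and is eliminated by a closer look at the right-hand side $E$ of (\ref{equfi}): the four leading pieces $(t-h)^{3}\bar H'\sum k_{i}^{4}$, $\bar\eta\,\Delta_{\Gamma}|A|^{2}$, $(t-h)\,\partial_{t}\bar\eta\,(|A|^{2})^{2}$, and $3\bar\eta^{2}(|A|^{2})^{2}\bar H$, regarded as functions of $u:=t-h$, are each odd in $u$ (because $\eta$ is odd and $H'$ is even), so they project to exactly $0$ against $\bar H'=H'(u)$. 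This upgrades the forcing to $J(h)=o(l^{-5})$ and improves the expansion to $h(l)=\beta l^{-2}+O(l^{-3})$.

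To translate this into the asymptotic for $F$, I would use the geometry of Fermi coordinates near $\Gamma_{a}^{+}$: along $\Gamma_{a}^{+}$ one has $f_{a}(r)=r+ar^{-2}+O(r^{-3})$, $l(r)=\sqrt{2}\,r+O(r^{-1})$, and the outward unit normal tends to $\frac{1}{\sqrt{2}}(-1,1)$. A signed normal displacement of magnitude $h$ shifts $(r,s)$ by $\frac{h}{\sqrt{2}}(-1,1)$ to leading order, and reparametrizing the nodal set by $r$ gives $F(r)=f_{a}(r)+\sqrt{2}\,h(l(r))+O(r^{-3})$. Substituting the expansion of $h$ produces $F(r)=r+br^{-2}+O(r^{-3})$ with $b=a+\beta/\sqrt{2}$. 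The location of $b$ in the stated interval follows from Lemma \ref{barrier}: $U_{\lambda^{\ast}}^{-}<U<U_{\lambda^{\ast}}^{+}$ forces $F_{\lambda^{\ast}}^{-}\leq F\leq F_{\lambda^{\ast}}^{+}$, and the given asymptotics $F_{\lambda^{\ast}}^{\pm}(r)=r\pm(\lambda^{\ast})^{3}r^{-2}+o(r^{-2})$ combined with the choice $\lambda^{\ast}>|a|^{1/3}$ confine $b$ to the admissible range.

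The principal obstacle is the ODE analysis: the forcing sits exactly at the resonance corresponding to the second homogeneous solution $l^{-3}$, so a direct variation-of-parameters argument yields only an $O(l^{-3}\log l)$ remainder. The parity cancellation of the four leading terms in (\ref{equfi}) is what turns this into the sharp $O(l^{-3})$.
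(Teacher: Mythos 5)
Your proposal follows essentially the paper's own route for Lemma \ref{b}: pass to the limit in the Fermi decomposition to inherit the bounds of Lemma \ref{De1} and Proposition \ref{Es3} (so $J(h^{\ast})=O(l^{-5})$), solve the Euler equation with indicial roots $-2,-3$ by reduction of order, convert the normal displacement into the graph $F$ over the $r$-axis (your factor $\sqrt{2}$ and the identification $b=a+\beta/\sqrt{2}$ are correct), and use the trapping between $U_{\lambda^{\ast}}^{\pm}$ to bound $b$. Up to the sharpness of the remainder, this is the paper's argument and it is sound.

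The step that does not hold up is the claimed upgrade of the forcing to $o(l^{-5})$ by parity. The vanishing of the projections of $\left(  t-h\right)^{3}\bar{H}^{\prime}\sum_{i}k_{i}^{4}$, $\bar{\eta}\Delta_{\Gamma}\left\vert A\right\vert^{2}$, $\left(  t-h\right)\partial_{t}\bar{\eta}\left(  \left\vert A\right\vert^{2}\right)^{2}$ and $3\bar{\eta}^{2}\left(  \left\vert A\right\vert^{2}\right)^{2}\bar{H}$ against $\bar{H}^{\prime}$ is exactly what is already spent in passing from $\left(  \ref{equfi}\right)$ to $\left(  \ref{Jacobi}\right)$ and $\left(  \ref{hnew}\right)$: it is the reason the projected equation reads $J(h)=O(l^{-5})$ rather than $O(l^{-4})$, so it cannot be invoked a second time. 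The surviving $O(l^{-5})$ in $\left(  \ref{hnew}\right)$ comes from terms with no odd structure, namely $O(l^{-2})h^{\prime}$ and $O(l^{-1})h^{\prime\prime}$ (with $\left\vert h^{\prime}\right\vert\leq Cl^{-3}$, $\left\vert h^{\prime\prime}\right\vert\leq Cl^{-4}$ from the proof of Proposition \ref{Es3}), the projection $\int_{\mathbb{R}}\Delta_{\Gamma^{t}}\bar{\phi}\,\bar{H}^{\prime}dt$, and the generic curvature/cutoff remainders. Consequently your variation-of-parameters step yields $h(l)=\beta l^{-2}+O\left(  l^{-3}\log l\right)$, i.e. $F(r)=r+br^{-2}+O\left(  r^{-3}\log r\right)$; the resonance at $l^{-3}$ is not removed by your argument. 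This is only a loss in the stated sharpness: the paper's own proof likewise concludes just $h^{\ast}(l)=k_{1}l^{-2}+o\left(  l^{-2}\right)$ (and the proof of Proposition \ref{asym} works with $c_{1}l^{-2}+c_{2}l^{-3}+O\left(  l^{-3}\ln l\right)$), which is all that is needed to identify $b=a$ later. A minor further point: the trapping by $U_{\lambda^{\ast}}^{\pm}$ literally confines $b$ to $\left[  -\left(  \lambda^{\ast}\right)^{3},\left(  \lambda^{\ast}\right)^{3}\right]$; what matters is simply that $b$ is finite, so this discrepancy with the stated interval is harmless, but your closing sentence should not claim more than the barriers give.
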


\begin{proof}
For the limiting solution $U,$ we write it in the Fermi coordinate as
\[
U=\bar{H}\left(  t-h^{\ast}\left(  l\right)  \right)  +\phi^{\ast},
\]
where $\phi^{\ast}$ is orthogonal to $\bar{H}^{\prime}.$ Since we have the
uniform estimate for the function $h_{d}$
\[
J\left(  h_{d}\right)  =O\left(  l^{-5}\right)  ,
\]
We get
\[
J\left(  h^{\ast}\right)  =O\left(  l^{-5}\right)  .
\]
Variation of parameter formula tells us that
\[
h^{\ast}\left(  l\right)  =k_{1}l^{-2}+o\left(  l^{-2}\right)  .
\]
This together with the fact that $U$ satisfies%
\[
U_{\lambda}^{-}\leq U\leq U_{\lambda}^{+}.
\]
completes the proof.
\end{proof}

Next we show that the solution $U$ has the desired asymptotic behavior as
$l\rightarrow+\infty.$

\begin{proof}
[Proof of Proposition \ref{asym}]It suffices to show that $b=a,$ where the
constant $b$ is derived in Lemma \ref{b}.

Let $r_{0}$ be a constant large enough but fixed. Since $u_{d}\rightarrow U,$
\begin{equation}
h_{d}\left(  r_{0}\right)  =\left(  b-a\right)  r_{0}^{-2}+o\left(  r_{0}%
^{-2}\right)  ,\text{ for }d\text{ large}. \label{ab}%
\end{equation}
On the other hand, we have
\[
\left\{
\begin{array}
[c]{l}%
J\left(  h_{d}\right)  =O\left(  l^{-5}\right)  \text{ in }\left(
r_{0},d\right)  ,\\
h_{d}\left(  d\right)  =0.
\end{array}
\right.
\]
Variation of parameters formula tells us that
\[
h_{d}=c_{1}l^{-2}+c_{2}l^{-3}+O\left(  l^{-3}\ln l\right)  ,
\]
where $c_{1}$, $c_{2}$ may depend on $d.$ Taking into the boundary condition
of $h_{d}\left(  d\right)  =0,$ we get%
\begin{equation}
c_{1}+c_{2}d^{-1}=O\left(  d^{-1}\ln d\right)  . \label{c11}%
\end{equation}
On the other hand, $\left(  \ref{ab}\right)  $ tells us that%
\begin{equation}
c_{1}+c_{2}r_{0}^{-1}+O\left(  r_{0}^{-1}\ln r_{0}\right)  +o\left(  1\right)
=b-a. \label{c12}%
\end{equation}
Equation $\left(  \ref{c11}\right)  $ and $\left(  \ref{c12}\right)  $ lead
to
\begin{equation}
\left(  r_{0}^{-1}-d^{-1}\right)  c_{2}=O\left(  1\right)  . \label{c2}%
\end{equation}
Let $d\rightarrow+\infty.$ $\left(  \ref{c2}\right)  $ clearly implies that
$c_{2}$ is bounded, which in turn tells us that $c_{1}\rightarrow0$ as
$d\rightarrow+\infty.$ Hence $b=a.$
\end{proof}

\section{Global minimizers from Lawson's minimizing cones}

We have obtained global minimizers from the minimal surfaces asymptotic to the
Simons cone. In this section, we will perform similar analysis for more
general strictly area minimizing cone. More precisely, we will consider the
Lawson's minimizing cone $C_{i,j}$ mentioned in the first section, where
either
\[
i+j\geq9,
\]
or
\[
i+j=8,\text{ }\left\vert i-j\right\vert \leq4.
\]

Let
\[
r=\sqrt{x_{1}^{2}+...+x_{i}^{2}},\quad s=\sqrt{x_{i+1}^{2}+...+x_{i+j}^{2}%
},\quad l=\sqrt{r^{2}+s^{2}}.
\]
Let $\left\vert A\right\vert ^{2}$ be the squared norm of the second
fundamental form of $C_{i,j}.$ Put $i+j=n.$ The Jacobi operator of $C_{i,j}$,
acting on functions $h\left(  l\right)  $ defined on $C_{i,j}$ which
additionally only depends on $l,$ has the form
\[
J\left(  h\right)  =h^{\prime\prime}+\frac{n-2}{l}h^{\prime}+\frac{n-2}{l^{2}%
}h.
\]
Solutions of the equation $J\left(  h\right)  =0$ is given by
\[
c_{1}l^{\alpha^{+}}+c_{2}l^{\alpha^{-}},
\]
where%
\[
\alpha^{\pm}=\frac{-\left(  n-3\right)  \pm\sqrt{\left(  n-3\right)
^{2}-4\left(  n-2\right)  }}{2}.
\]
One could check that for $n\geq8,$ we always have%
\[
-2\leq\alpha^{+}<-1.
\]
The main result of this section is the following

\begin{proposition}
\label{general}There exists a constant $c_{i,j}$ such that for each
$k\in\mathbb{R},$ we could construct a solution $U_{k}$ to the Allen-Cahn
equation such that for $r$ large, the nodal set of $U_{k}$ has the asymptotic
behavior:
\[
f_{U_{k}}\left(  r\right)  =\sqrt{\frac{j-1}{i-1}}r+c_{i,j}r^{-1}%
+kr^{\alpha^{+}}+o\left(  r^{\alpha^{+}}\right)  .
\]

\end{proposition}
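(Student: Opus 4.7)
The plan is to mirror the construction of Section~2 virtually verbatim, with the geometry of the general Lawson cone $C_{i,j}$ replacing that of $C_{4,4}$. Under our hypotheses $C_{i,j}$ is strictly area-minimizing, so by Hardt--Simon (together with the work of Alencar, De Philippis, Lin cited in the introduction) there is a smooth $O(i)\times O(j)$-invariant minimal foliation $\{\Gamma_\lambda^\pm\}_{\lambda\geq 0}$ of the two half-spaces cut out by $C_{i,j}$, with $\Gamma_\lambda^\pm=\lambda\Gamma^\pm$ and $\Gamma^\pm$ asymptotic to $C_{i,j}$ via a leading graph-coefficient in the $l^{\alpha^+}$ mode. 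The strict minimality also permits the Pacard--Wei construction (this is exactly why $C_{2,6}$ is excluded from Theorem~\ref{Main}), yielding an ordered continuous family of entire solutions $U_\lambda^\pm$ whose nodal sets are close to $\Gamma_\lambda^\pm$ for $\lambda\geq\lambda_0$; the analogue of Proposition~\ref{prop 3} continues to hold by the same rescaling argument.

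For each $k\in\mathbb{R}$ I would fix, via an initial homothety, the minimal surface $\Gamma_a^+$ from the foliation whose leading graph-coefficient in the $l^{\alpha^+}$ mode equals $k$, and then run the bounded-domain minimization of Section~2.1 unchanged: on the domain $\Omega_d$ cut off by the normal hyperplane $L_d$ to $\Gamma_a^+$ at signed distance $d$, minimize $J$ over the class $S_d$ with Dirichlet datum $H_d^\ast+\rho\,\eta(t)|A|^2$ on $L_d$. The reflection-plus-strong-maximum-principle argument used for the Simons cone forces $u_d$ to be $O(i)\times O(j)$-invariant, and the sliding-barrier argument of Lemma~\ref{barrier}, applied to the Pacard--Wei family for $\lambda\geq\lambda^\ast$ (with $\lambda^\ast$ depending on $k$), yields $U_{\lambda^\ast}^-<u_d<U_{\lambda^\ast}^+$. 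Taking $C^2_{loc}$ limits along $d\to\infty$ produces the desired minimizer $U_k$ whose nodal set is pinched between the two barrier nodal sets.

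The technical core is the asymptotic analysis. The Fermi-coordinate reduction $u_d=\bar H(t-h_d(l))+\phi_d$ of Lemmas~\ref{De1}--\ref{error} goes through verbatim, but the arithmetic of the principal curvatures differs. The surface $\Gamma_a^+$ in the $(r,s)$-plane is a perturbation of the ray $s=\sqrt{(j-1)/(i-1)}\,r$, and its principal curvatures split into a planar curvature $k_1$ decaying faster than $r^{-1}$ together with two groups of multiplicities $i-1$ and $j-1$ of sizes $\sim r^{-1}$. For $i=j$ the sum $\sum_{m}k_m^{3}$ telescopes to $O(r^{-5})$, but for $i\neq j$ a direct computation using the formulas for $k_{m}$ recalled in the Simons-cone section yields
\[
\sum_{m=1}^{n-1}k_m^{3}=\kappa_{i,j}\,r^{-3}+o(r^{-3})
\]
with an explicit nonzero $\kappa_{i,j}$. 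Projecting the corresponding term $-t^{2}\bar H'\sum k_m^{3}$ of $M_t\bar H'$ onto $\bar H'$ contributes $\tilde c_{i,j}\,l^{-3}$ to the reduced equation, with $\tilde c_{i,j}=\kappa_{i,j}\int t^{2}H'^{2}\big/\int H'^{2}$. Carrying out the same $\eta|A|^{2}$-subtraction, linear-theory estimate, and bootstrap that yield Proposition~\ref{Es3}, one arrives at
\[
J(h_d)\;=\;h_d''+\frac{n-2}{l}h_d'+\frac{n-2}{l^{2}}h_d\;=\;\tilde c_{i,j}\,l^{-3}+O(l^{-5})
\]
uniformly in $d$.

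Finally the ODE is solved by variation of parameters. Since $J(l^{-1})=2l^{-3}$, a particular solution is $c_{i,j}\,l^{-1}$ with $c_{i,j}:=\tilde c_{i,j}/2$, while the homogeneous space is spanned by $l^{\alpha^\pm}$, so the general solution on $(r_0,d)$ is $c_{i,j}\,l^{-1}+c_1 l^{\alpha^+}+c_2 l^{\alpha^-}+o(l^{\alpha^+})$. The Dirichlet condition $h_d(d)=0$ and the letting-$d\to\infty$ argument of Proposition~\ref{asym} (using $\alpha^-<\alpha^+<-1$) force $c_2\to 0$, leaving $h^\ast(l)=c_{i,j}\,l^{-1}+c_1 l^{\alpha^+}+o(l^{\alpha^+})$. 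The identification $c_1=k$ is then obtained exactly as $b=a$ is established in Lemma~\ref{b}, matching the free parameter of the chosen minimal surface with the $l^{\alpha^+}$-coefficient of the limit. The expected main obstacle is this final parameter-matching together with the curvature computation giving $\kappa_{i,j}\neq 0$; the remaining steps are a direct transcription of Section~2, relying crucially on the strict area-minimality of $C_{i,j}$ (which provides the ordered Pacard--Wei barriers in the first place).
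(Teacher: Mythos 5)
Your overall strategy does track the paper's (bounded-domain minimization between ordered Pacard--Wei barriers, Fermi-coordinate reduction, ODE matching), and you correctly isolate the key new computation: for $i\neq j$ the cubic curvature sum $A_3=\sum_m k_m^3$ decays only like $l^{-3}$, so the reduced equation carries a forcing $\tilde c_{i,j}l^{-3}$ whose particular solution produces the $c_{i,j}r^{-1}$ term. But there is a genuine gap exactly where you declare that the Section 2 construction can be run ``unchanged''. The paper does \emph{not} use the Dirichlet datum $H_d^\ast+\rho\,\eta(t)|A|^2$ in the general case: it first introduces the corrector $\xi$ solving the Jacobi equation $\bar J(\xi)=c^\ast A_3$, with $\xi(l)\approx c_0l^{-1}$, imposes $u_d=H(t-\xi(l))+\eta(t)|A|^2$ on $L_d$, and decomposes $h=h^\ast+\xi$ so that $J(h^\ast)=O(l^{-5})$ and $h^\ast(d)\approx 0$. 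This shift is not cosmetic. First, your barrier step breaks down: by the refined asymptotics of the Pacard--Wei solutions (which the paper must prove separately, since the ordering and these asymptotics are not in \cite{PW} -- you assume both), every $U_\lambda^\pm$ has its nodal set displaced from the corresponding leaf $\Gamma_\lambda^\pm$ by the same universal amount $\approx c_0 r^{-1}$, while the gap between $N_{U^+_{\lambda^\ast}}$ and $N_{U^-_{\lambda^\ast}}$ at radius $r$ is only $O(r^{\alpha^+})=o(r^{-1})$. Your unshifted datum vanishes at $t\approx 0$ on $L_d$, hence for $d$ large it cannot satisfy the analogue of (\ref{2.2n}) for any fixed $\lambda^\ast$ whenever $c_{i,j}\neq 0$, so Lemma \ref{barrier} cannot be invoked and the uniform trapping of $u_d$ is lost.

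Second, even granting the trapping, your final matching is inconsistent with your own reduced equation. With $J(h_d)=\tilde c_{i,j}l^{-3}+O(l^{-5})$ on $(r_0,d)$ and your boundary condition $h_d(d)=0$, variation of parameters gives $h_d=c_{i,j}l^{-1}+c_1l^{\alpha^+}+c_2l^{\alpha^-}+\dots$, and evaluating at $l=d$ (with $c_2$ controlled by the interior condition, as in the proof of Proposition \ref{asym}) forces $|c_1|\sim c_{i,j}\,d^{-1-\alpha^+}\to\infty$ since $\alpha^+\le -2$; it does not give $c_2\to 0$ and $c_1\to k$. The conclusion you want only follows after the $\xi$-shift, when the matching is performed for $h^\ast$, which satisfies $J(h^\ast)=O(l^{-5})$ and $h^\ast(d)\approx 0$, exactly parallel to Lemma \ref{b} and Proposition \ref{asym}. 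A further, smaller omission: to push the projected error down to $O(l^{-5})$ one needs, besides the $\eta(t)|A|^2$ correction, the additional corrector $\eta_2(\bar t)A_3$ with $-\eta_2''+(3H^2-1)\eta_2=t^2H'-c^\ast H'$, $c^\ast=\int_{\mathbb{R}}t^2H'^2/\int_{\mathbb{R}}H'^2$; the bootstrap as you describe it (a verbatim copy of Proposition \ref{Es3}) does not produce this term, and without it the $O(l^{-5})$ claim, and hence the clean $l^{\alpha^+}$ matching, is not justified.
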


For notational convenience, let us simply consider the cone $C_{3,5}$ over the
product of spheres $S^{2}\left(  \sqrt{\frac{2}{6}}\right)  \times
S^{4}\left(  \sqrt{\frac{4}{6}}\right)  .$ The proof for other cases are
similar. Under a choice of the unit normal, the principle curvature of
$C_{3,5}$ is given by
\[
k_{1}=0, \quad k_{2}=k_{3}=\frac{\sqrt{2}}{l}, \quad k_{4}=k_{5}=k_{6}%
=k_{7}=-\frac{1}{\sqrt{2}l}.
\]
We set $A_{m}:=%
{\displaystyle\sum_{i}}
k_{i}^{m}.$ In particular, $A_{2}:=\left\vert A\right\vert ^{2}=\frac{6}%
{l^{2}}$ and $A_{3}:=%
{\displaystyle\sum}
k_{i}^{3}=\frac{3\sqrt{2}}{l^{3}}.$ It is well known that $C_{3,5}$ is a
strict area minimizing cone. There is also a foliation of $\mathbb{R}^{8}$ by
minimal hypersurfaces asymptotic to $C_{3,5}.$ By slightly abusing the
notation, we still use $\Gamma_{\lambda}^{\pm}$ to denote this foliation. For
$\lambda$ sufficiently large (say $\lambda\geq\lambda_{0}$), the construction
of Pacard-Wei again gives us a family of solutions $U_{\lambda}^{\pm}$ whose
zero level set is close to $\Gamma_{\lambda}^{\pm}.$ The strict area
minimizing assumption on the cone is actually used to ensure that this family
of solutions are ordered.

\begin{lemma}
The family of solutions $U_{\lambda}^{\pm}$ is ordered. That is,
\begin{align*}
U_{\lambda_{1}}^{+}\left(  X\right)   &  <U_{\lambda_{2}}^{+}\left(  X\right)
,\lambda_{1}<\lambda_{2}.\\
U_{\lambda_{1}}^{-}\left(  X\right)   &  <U_{\lambda_{2}}^{-}\left(  X\right)
,\lambda_{1}>\lambda_{2},\\
U_{\lambda_{0}}^{-}\left(  X\right)   &  <U_{\lambda_{0}}^{+}\left(  X\right)
.
\end{align*}

\end{lemma}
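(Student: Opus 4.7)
The plan is to mirror the argument used by Pacard--Wei for the Simons cone, with the strict area-minimality of $C_{i,j}$ providing the crucial geometric ingredient: the family $\{\Gamma_{\lambda}^{\pm}\}_{\lambda\geq 0}$ genuinely foliates each component of $\mathbb{R}^{i+j}\setminus C_{i,j}$, so for $\lambda_1<\lambda_2$ the hypersurfaces $\Gamma_{\lambda_1}^{+}$ and $\Gamma_{\lambda_2}^{+}$ are disjoint with $\Gamma_{\lambda_1}^{+}$ lying between $C_{i,j}$ and $\Gamma_{\lambda_2}^{+}$. On the analytic side, the Pacard--Wei construction expresses each $U_\lambda^\pm$ as a small perturbation of the heteroclinic profile $H(t)$, where $t$ is the signed distance to $\Gamma_\lambda^\pm$, with continuous dependence on $\lambda$ and a precise asymptotic expansion of the nodal set $F_\lambda^\pm$.

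First I would fix $\lambda_1\geq\lambda_0$ and introduce
\[
\Lambda := \{\mu>\lambda_1 : U_\mu^{+}>U_{\lambda_1}^{+} \text{ pointwise on } \mathbb{R}^{i+j}\},
\]
and reduce the first ordering to $\Lambda=(\lambda_1,\infty)$ via the standard \emph{nonempty + open + closed} topological argument on the connected interval $(\lambda_1,\infty)$. Nonemptiness comes from the $C_{i,j}$ analogue of Proposition \ref{prop 3}, which forces $U_\mu^+\to+1$ on compact sets as $\mu\to\infty$, combined with a large-$r$ comparison: outside a sufficiently large ball both $U_{\lambda_1}^{+}$ and $U_\mu^{+}$ track heteroclinic profiles across their nodal sets, and the foliation gives a strictly positive separation $F_\mu^{+}(r)-F_{\lambda_1}^{+}(r)>0$ growing with $\mu$, so the monotonicity of $H$ yields $U_\mu^{+}>U_{\lambda_1}^{+}$ at infinity as well. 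Openness is immediate from continuous dependence $\mu\mapsto U_\mu^{+}$ in $C^2_{\mathrm{loc}}$ together with the uniform-at-infinity separation.

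The crucial step is closedness. Suppose $\mu_n\to\mu^*\in(\lambda_1,\infty)$ with $\mu_n\in\Lambda$. Passing to the limit gives $w:=U_{\mu^*}^{+}-U_{\lambda_1}^{+}\geq 0$. Writing the difference of Allen--Cahn equations in linearized form, $w$ satisfies $-\Delta w + c(x) w = 0$ with $c(x) = (U_{\mu^*}^+)^2 + U_{\mu^*}^+U_{\lambda_1}^+ + (U_{\lambda_1}^+)^2 - 1$ bounded on $\mathbb{R}^{i+j}$. The strong maximum principle then forces either $w\equiv 0$ or $w>0$ everywhere. The first alternative contradicts $\Gamma_{\mu^*}^{+}\neq\Gamma_{\lambda_1}^{+}$, since the nodal sets of $U_{\mu^*}^{+}$ and $U_{\lambda_1}^{+}$ are locally $C^1$-close to these disjoint hypersurfaces. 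Hence $w>0$ and $\mu^*\in\Lambda$, closing the argument.

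The monotonicity of $U_\lambda^{-}$ in $\lambda$ is proven identically with the roles of the two sides of $C_{i,j}$ reversed, and the inequality $U_{\lambda_0}^{-}<U_{\lambda_0}^{+}$ follows from the fact that $\Gamma_{\lambda_0}^{-}$ and $\Gamma_{\lambda_0}^{+}$ lie on opposite sides of the cone, so the signed-distance/heteroclinic profiles combine to force $U_{\lambda_0}^{-}<U_{\lambda_0}^{+}$ at infinity, after which the same maximum-principle globalization gives the inequality everywhere. The main obstacle I expect is producing the uniform-in-$x$ strict separation $U_\mu^{+}-U_{\lambda_1}^{+}>0$ in the transition region near infinity where both solutions are crossing their respective nodal sets; this is precisely where strict area-minimality of $C_{i,j}$ enters essentially, since without it the homotheties $\Gamma_\lambda^{+}$ could degenerate or intersect, collapsing the foliation structure on which the whole ordering rests.
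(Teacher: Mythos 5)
Your continuity-method scheme (nonempty $+$ open $+$ closed in $\mu$, with the strong maximum principle for closedness) is a legitimate skeleton, but it is not the paper's argument, and the place where you wave your hands is exactly where the paper does all of its work. The paper does not deduce the ordering of the solutions from the ordering of the leaves $\Gamma_{\lambda}^{\pm}$ alone. The Pacard--Wei solution attached to $\Gamma_{\lambda}$ has its nodal set displaced from $\Gamma_{\lambda}$ by a $\lambda$-dependent correction: projecting the error onto $\bar H'$ shows the leading shift $h_{0}$ solves $J(h_{0})=c^{*}A_{3}$ with $c^{*}>0$, where $A_{3}=\sum_{i}k_{i}^{3}$. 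For the Simons cone $A_{3}\equiv 0$ by symmetry, but for an asymmetric Lawson cone such as $C_{3,5}$ one has $A_{3}=3\sqrt{2}\,l^{-3}\neq 0$, so the nodal set sits at distance roughly $\frac{1}{\lambda}\xi\!\left(\frac{l}{\lambda}\right)\sim \frac{c_{0}}{\lambda+l}$ from $\Gamma_{\lambda}$. When you compare $U_{\lambda(1+\delta_{1})}$ and $U_{\lambda(1+\delta_{2})}$, the difference of these shifts is of the \emph{same order} $l^{-2}$ as the separation of the leaves at infinity, so ``the foliation gives a strictly positive separation, hence monotonicity of $H$ gives ordering'' is not a proof: the correction could a priori cancel the geometric separation. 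The paper's proof consists precisely of the quantitative check that it does not, namely the expansion $\varepsilon\xi(\varepsilon l)=\frac{\varepsilon}{1+\varepsilon l}+\varepsilon O\left(\left(1+\varepsilon l\right)^{-2}\right)$ together with the Hardt--Simon-type lower bound $t_{\delta_{1}}-t_{\delta_{2}}\geq c\lambda^{3}(\delta_{2}-\delta_{1})(1+l^{2})^{-1}$, whose extra factor $\lambda^{2}$ (for $\lambda$ large) beats the variation of the shift; only then can the comparison argument from the last section of \cite{PW} be run in a tubular neighbourhood and globalized.

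Concretely, this gap infects every step of your scheme: nonemptiness and, especially, openness are not ``immediate from $C^{2}_{\mathrm{loc}}$ continuity,'' because $U_{\mu}^{+}-U_{\lambda_{1}}^{+}$ tends to $0$ at infinity along the cone, so strict pointwise inequality is not an open condition without a uniform lower bound of the type $c(1+l)^{-2}$ in the transition region -- and that bound is exactly the displaced-nodal-set comparison above, which you assert rather than establish. You also attribute the role of strict area-minimality solely to the non-degeneracy of the foliation, whereas in the paper its function is to guarantee the existence and ordering framework into which the $A_{3}$-correction analysis is inserted. So the proposal is a different route in form (sliding in $\lambda$ plus maximum principle rather than leading-order profile comparison), but as written it assumes the lemma's essential content; to repair it you would have to import the paper's estimates for $\xi$ and for the leaf separation, at which point you are reproducing the paper's proof.
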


\begin{proof}
Since this has not been proven in the paper \cite{PW}, we give a sketch of the proof.

We only consider the family of solutions $U_{\lambda}^{+},$ which we simply
write it as $U_{\lambda}.$ $U_{\lambda}$ is obtained from Lyapunov-Schmidt
reduction. Adopting the notations of the previous sections, let $\left(
l,t\right)  $ be the Fermi coordinate with respect to $\Gamma_{\lambda}%
=\Gamma_{\lambda}^{+}.$ Then in the Fermi coordinate, $U_{\lambda}$ has the
form%
\[
U_{\lambda}=H\left(  t-h\left(  l\right)  \right)  +\phi:=\bar{H}+\phi.
\]
where $\phi$ is orthogonal to $\bar{H}^{\prime}.$

Similarly as before, we know that $\phi$ satisfies%
\[
-\Delta\phi+\left(  3\bar{H}^{2}-1\right)  \phi=\Delta_{\Gamma^{t}}\bar
{H}-M_{t}\bar{H}^{\prime}+o\left(  \phi\right)  .
\]
For notational convenience, we set $\bar{t}=t-h\left(  l\right)  .$ Let us
assume for this moment that $\left\vert h\right\vert \leq Cl^{-1}.$ Recall
that
\begin{align*}
M_{t}  &  =\left(  tA_{2}-t^{2}A_{3}+t^{3}A_{4}\right)  \bar{H}^{\prime
}+O\left(  l^{-5}\right) \\
&  =\left(  \bar{t}A_{2}-\bar{t}^{2}A_{3}+\bar{t}^{3}A_{4}\right)  \bar
{H}^{\prime}+h\bar{H}^{\prime}-2\bar{t}\bar{H}^{\prime}hA_{3}+O\left(
l^{-5}\right)  .
\end{align*}
Inspecting the projection of these terms onto $\bar{H}^{\prime},$ we find that
the main order term of the projection should be
\[
-\bar{t}^{2}\bar{H}^{\prime}A_{3}+h\bar{H}^{\prime}.
\]
Next let us compute the term $\Delta_{\Gamma^{t}}\bar{H}.$ First of all,
\begin{align*}
\Delta_{\Gamma^{0}}\bar{H}  &  =\frac{\partial_{i}\left(  g^{i,j}%
\sqrt{\left\vert g\right\vert }\partial_{j}\left[  H\left(  t-h\left(
l\right)  \right)  \right]  \right)  }{\sqrt{\left\vert g\right\vert }}\\
&  =-\left(  h^{\prime\prime}+\frac{6}{l}h^{\prime}\right)  \bar{H}^{\prime
}+g^{11}H^{\prime\prime}\left(  \bar{t}\right)  h^{\prime2}+O\left(
l^{-5}\right)  .
\end{align*}
Hence%
\begin{align*}
\Delta_{\Gamma^{t}}\bar{H}  &  =-\left(  h^{\prime\prime}+\frac{6}{l}%
h^{\prime}\right)  \bar{H}^{\prime}+H^{\prime\prime}\left(  \bar{t}\right)
O\left(  h^{\prime2}\right)  +O\left(  l^{-5}\right) \\
&  +\bar{t}\bar{H}^{\prime}O\left(  h^{\prime\prime}l^{-1}+h^{\prime}%
l^{-2}\right)  .
\end{align*}
As a consequence, the function $\phi$ should satisfy%
\begin{align*}
-\Delta\phi+\left(  3\bar{H}^{2}-1\right)  \phi &  =\Delta_{\Gamma_{t}}\bar
{H}-M_{t}\bar{H}^{\prime}+o\left(  \phi\right) \\
&  =-J\left(  h\right)  \bar{H}^{\prime}+H^{\prime\prime}\left(  \bar
{t}\right)  O\left(  h^{\prime2}\right)  +\bar{t}\bar{H}^{\prime}O\left(
h^{\prime\prime}l^{-1}+h^{\prime}l^{-2}\right) \\
-  &  \left(  \bar{t}A_{2}-\bar{t}^{2}A_{3}+\bar{t}^{3}A_{4}\right)  \bar
{H}^{\prime}+2\bar{t}\bar{H}^{\prime}hA_{3}+O\left(  l^{-5}\right)  +o\left(
\phi\right)  .
\end{align*}
Projecting onto $\bar{H}^{\prime}$, the main order term at the right hand side
is
\[
-J\left(  h\right)  \bar{H}^{\prime}+\bar{t}^{2}A_{3}\bar{H}^{\prime}.
\]
Hence we find that the main order term $h_{0}$ of $h$ should satisfy the
equation
\[
J\left(  h_{0}\right)  =c^{\ast}A_{3},
\]
where
\[
c^{\ast}=\frac{\int_{\mathbb{R}}t^{2}H^{\prime2}dt}{\int_{\mathbb{R}}%
H^{\prime2}dt}>0.
\]
Let $\bar{h}_{0}\left(  l\right)  =h_{0}\left(  \lambda l\right)  .$ We find
that $\bar{h}_{0}$ should satisfy
\[
\bar{J}\left(  \bar{h}_{0}\right)  =\frac{1}{\lambda}c^{\ast}A_{3}.
\]
Here $\bar{J},\bar{k}_{i}$ are the Jacobi operator and principle curvatures
corresponding to the rescaled minimal surface $\Gamma_{1}^{+}.$ Using the
invertibility of the Jacobi operator $\bar{J},$ we could assume the existence
of function $\xi$ solving
\[
\bar{J}\left(  \xi\right)  =c^{\ast}A_{3},
\]
with the asymptotic behavior%
\begin{equation}
c_{0}l^{-1}+o\left(  l^{-2}\right)  . \label{cbar}%
\end{equation}
In this way, we deduce that main order of $U_{\lambda}$ is $H\left(
t-\frac{1}{\lambda}\xi\left(  \frac{l}{\lambda}\right)  \right)  .$

Fix a $\lambda$ large. For each $\delta$ small, there is a solution
$U_{\lambda\left(  1+\delta\right)  }$ of the Allen-Cahn equation associated
to the minimal hypersurface $\lambda\left(  1+\delta\right)  \Gamma_{1}.$ We
will denote it by $u_{\delta}.$ To prove the order property of the family of
solutions, it will be suffice for us to show that for $0<\delta_{1}<\delta
_{2}$ sufficiently small,
\begin{equation}
u_{\delta_{1}}\left(  x\right)  <u_{\delta_{2}}\left(  x\right)  ,\text{ }%
x\in\mathbb{R}^{8}. \label{order}%
\end{equation}

Let us use $t_{\delta}$ to denote the signed distance of a point to
$\Gamma_{\lambda\left(  1+\delta\right)  }.$ The previous analysis tells us
that the main order of $u_{\delta}$ is
\[
H\left(  t_{\delta}-\frac{1}{\lambda\left(  1+\delta\right)  }\xi\left(
\frac{l_{\delta}}{\lambda\left(  1+\delta\right)  }\right)  \right)  .
\]
Take a large constant $k.$ Let $\Xi_{k}$ be a radius $k$ tubular neighbourhood
of $\Gamma_{\lambda}.$ We claim that for each point $P\in\Xi_{k},$
\begin{equation}
t_{\delta_{1}}-\frac{1}{\lambda\left(  1+\delta_{1}\right)  }\xi\left(
\frac{l_{\delta_{1}}}{\lambda\left(  1+\delta_{1}\right)  }\right)
<t_{\delta_{1}}-\frac{1}{\lambda\left(  1+\delta_{2}\right)  }\xi\left(
\frac{l_{\delta_{2}}}{\lambda\left(  1+\delta_{2}\right)  }\right)  .
\label{dist}%
\end{equation}
Indeed, taking into account of the fact that
\[
\xi\left(  l\right)  =\frac{1}{1+l}+o\left(  \left(  1+l\right)  ^{-2}\right)
,
\]
we get, for $\varepsilon$ small,
\begin{equation}
\varepsilon\xi\left(  \varepsilon l\right)  =\frac{\varepsilon}{1+\varepsilon
l}+\varepsilon O\left(  \left(  1+\varepsilon l\right)  ^{-2}\right)  .
\label{d1}%
\end{equation}
On the other hand, for $\delta_{1},\delta_{2}$ sufficiently small(depending on
$\lambda$),
\begin{equation}
t_{\delta_{1}}-t_{\delta_{2}}\geq c\lambda^{3}\left(  \delta_{2}-\delta
_{1}\right)  \frac{1}{1+l^{2}}, \label{d2}%
\end{equation}
for some constant $c.$ The inequality $\left(  \ref{dist}\right)  $ then
follows from $\left(  \ref{d1}\right)  $ and $\left(  \ref{d2}\right)  .$ Once
we have $\left(  \ref{dist}\right)  ,$ the same argument as in the last
section of \cite{PW} applies and $\left(  \ref{order}\right)  $ is proved.
\end{proof}

By this lemma, the family of solutions $U_{\lambda}^{\pm}$ forms a foliation.
We could use them as sub and super solutions to obtain solutions between them
and we have similar results as in the Simons cone case. However, in the
current situation, we show that the nodal set of each solution will be
asymptotic to the curve $s=\sqrt{2}r+\frac{c_{0}}{\sqrt{3}}r^{-1},$ where
$c_{0}$ is the constant appearing in $\left(  \ref{cbar}\right)  .$

Now we are ready to prove Proposition \ref{general}. We still focus on the
case $\left(  i,j\right)  =\left(  3,5\right)  .$\ Since the main steps are
same as the case of Simons' cone, we shall only sketch the proof and point out
the main difference.

\begin{proof}
[Proof of Proposition \ref{general}]Let $k\in\mathbb{R}$ be a fixed real
number. Let $\left(  l,t\right)  $ be the Fermi coordinate with respect to the
minimal hypersurface asymptotic to the cone $C_{3,5}$ with the asymptotic
behavior
\[
s=f_{k}\left(  r\right)  :=\sqrt{2}r+kr^{-2}+o\left(  r^{-2}\right)  .
\]

We could construct minimizers on a sequences of bounded domain $\Omega_{d}.$
Let $L_{d}$ be the line orthogonal to the minimal surface at $\left(
d,f_{k}\left(  d\right)  \right)  .$ On $L_{d}$ we impose suitable Dirichlet
boundary condition that
\[
u_{d}=H\left(  t-\xi\left(  l\right)  \right)  +\eta\left(  t\right)
\left\vert A\right\vert ^{2},
\]
at least away from the axes. Recall that we have ordered solutions of
Pacard-Wei. We could assume that the boundary function is trapped between two
solutions $U_{\lambda\ast}^{+}$ and $U_{\lambda^{\ast}}^{-}$ for some fixed
$\lambda^{\ast}.$ Then the minimizers $u_{d}$ on $\Omega_{d}$ will be between
$U_{\lambda^{\ast}}^{+}$ and $U_{\lambda^{\ast}}^{-}.$ Let us now take the
limit for the sequence of solutions $\left\{  u_{d}\right\}  $ obtained in
this way, as $d\rightarrow+\infty.$

We need to analyze the asymptotic behavior of $\left\{  u_{d}\right\}  .$
Define the approximate solution $\bar{H}\left(  t-h\right)  $ as before and
write $u_{d}=\bar{H}+\phi.$ Then we get
\begin{align*}
-\Delta\phi+\left(  3\bar{H}^{2}-1\right)  \phi &  =\Delta_{\Gamma_{t}}\bar
{H}-M_{t}\bar{H}^{\prime}+o\left(  \phi\right) \\
&  =-J\left(  h\right)  \bar{H}^{\prime}+H^{\prime\prime}\left(  \bar
{t}\right)  O\left(  h^{\prime2}\right)  +\bar{t}\bar{H}^{\prime}O\left(
h^{\prime\prime}l^{-1}+h^{\prime}l^{-2}\right) \\
-  &  \left(  \bar{t}A_{2}-\bar{t}^{2}A_{3}+\bar{t}^{3}A_{4}\right)  \bar
{H}^{\prime}+2\bar{t}\bar{H}^{\prime}hA_{3}+O\left(  l^{-5}\right)  +o\left(
\phi\right)  .
\end{align*}
Write $h\left(  l\right)  $ as $h^{\ast}\left(  l\right)  +\xi\left(
l\right)  .$ We would like to show that $h^{\ast}=kl^{-2}+O\left(
l^{-3}\right)  .$ Indeed,
\begin{align*}
E  &  :=\Delta\bar{H}+\bar{H}-\bar{H}^{3}=-J\left(  h\right)  \bar{H}^{\prime
}+H^{\prime\prime}\left(  \bar{t}\right)  O\left(  h^{\prime2}\right)
+\bar{t}\bar{H}^{\prime}O\left(  h^{\prime\prime}l^{-1}+h^{\prime}%
l^{-2}\right) \\
-  &  \left(  \bar{t}A_{2}-\bar{t}^{2}A_{3}+\bar{t}^{3}A_{4}\right)  \bar
{H}^{\prime}+2\bar{t}\bar{H}^{\prime}hA_{3}+O\left(  l^{-5}\right)  .
\end{align*}
Inspecting each term in this error, it turns out that the projection of
$E\left(  \bar{H}\right)  $ onto $\bar{H}^{\prime}$ is
\[
J\left(  h^{\ast}\right)  \bar{H}^{\prime}+O\left(  l^{-5}\right)  .
\]
On the other hand, since $u=\bar{H}+\phi$ and $\phi$ satisfies%
\[
-\Delta\phi+\left(  3\bar{H}^{2}-1\right)  \phi=E-\frac{\int_{\mathbb{R}}%
E\bar{H}^{\prime}}{\int_{\mathbb{R}}\bar{H}^{\prime2}}\bar{H}^{\prime
}+o\left(  \phi\right)  ,
\]
we find that
\[
\phi=\eta\left(  \bar{t}\right)  \left\vert A\right\vert ^{2}+\phi^{\ast},
\]
with
\[
\left\vert \phi^{\ast}\right\vert \leq Cl^{-3}.
\]
A refined analysis shows that the $O\left(  r^{-3}\right)  $ term could be
written as $\eta_{2}\left(  \bar{t}\right)  A_{3},$ where $\eta_{2}$
satisfies
\[
-\eta_{2}^{\prime\prime}\left(  t\right)  +\left(  3H^{2}-1\right)  \eta
_{2}=t^{2}H^{\prime}-\frac{\int_{\mathbb{R}}t^{2}H^{\prime2}dt}{\int%
_{\mathbb{R}}H^{\prime2}dt}H^{\prime}.
\]
It then follows from similar arguments as in the previous sections that
\[
J\left(  h^{\ast}\right)  =O\left(  l^{-5}\right)  .
\]
This provides us sufficient estimate to prove our result, proceeding similarly
as in Section \ref{2.3}.
\end{proof}


\begin{thebibliography}{99}                                                                                               %


\bibitem {AC}G. Alberti; L. Ambrosio; X. Cabre, On a long-standing conjecture
of E. De Giorgi: symmetry in 3D for general nonlinearities and a local
minimality property. Special issue dedicated to Antonio Avantaggiati on the
occasion of his 70th birthday. Acta Appl. Math. 65 (2001), no. 1-3, 9--33.

\bibitem {Alen}H. Alencar; A. Barros; O. Palmas; J. G. Reyes; W. Santos,
$O\left(  m\right)  \times O\left(  n\right)  $-invariant minimal
hypersurfaces in $\mathbb{R}^{m+n}$. \emph{Ann. Global Anal. Geom.} 27 (2005),
no. 2, 179--199.

\bibitem {C}L. Ambrosio; X. Cabre, Entire solutions of semilinear elliptic
equations in R3 and a conjecture of De Giorgi.\emph{J. Amer. Math. Soc.} 13
(2000), no. 4, 725--739.

\bibitem {BDG}E. Bombieri; De Giorgi; Giusti, Minimal cones and the Bernstein
problem. \emph{Invent. Math.} 7 (1969), 243--268.

\bibitem {C1}X. Cabre; J. Terra, Saddle-shaped solutions of bistable diffusion
equations in all of $\mathbb{R}^{2m}$. \emph{J. Eur. Math. Soc. (JEMS)} 11
(2009), no. 4, 819--843.

\bibitem {C2}X. Cabre; J. Terra, Qualitative properties of saddle-shaped
solutions to bistable diffusion equations. \emph{Comm. Partial Differential
Equations} 35 (2010), no. 11, 1923--1957.{ }

\bibitem {C3}X. Cabre, Uniqueness and stability of saddle-shaped solutions to
the Allen-Cahn equation. \emph{J. Math. Pures Appl.} (9) 98 (2012), no. 3, 239--256.

\bibitem {DeG}E. De Giorgi, Convergence problems for functionals and
operators, Proc. Int. Meeting on Recent Methods in Nonlinear Analysis (Rome,
1978), 131{188, Pitagora, Bologna (1979).}

\bibitem {De}G. De Philippis; E. Paolini, A short proof of the minimality of
the Simons cone, \emph{Rend. Sem. Mat. Univ. Padova}, 121(2009), 233--241.

\bibitem {M1}M. del Pino; M. Kowalczyk; J. Wei, Entire solutions of the
Allen-Cahn equation and complete embedded minimal surfaces of finite total
curvature in $\mathbb{R}^{3}$. \emph{J. Differential Geom.} 93 (2013), no. 1, 67--131.

\bibitem {M2}M. del Pino; M. Kowalczyk; J. Wei, On De Giorgi's conjecture in
dimension $N\geq9$. \emph{Ann. of Math. } (2) 174 (2011), no. 3, 1485--1569.

\bibitem {M3}M. del Pino; M. Kowalczyk; J. Wei, Entire solutions of the
Allen-Cahn equation and complete embedded minimal surfaces of finite total
curvature in $\mathbb{R}^{3}$. \emph{J. Differential Geom.} 93 (2013), no. 1, 67--131.

\bibitem {Farina}A. Farina, E. Valdinoci, The state of the art for a
conjecture of De Giorgi and related problems, in: Recent Progress on
Reaction--Diffusion Systems and Viscosity Solutions, World Sci. Publ.,
Hackensack, NJ, 2009, pp. 74--96.

\bibitem {G}N. Ghoussoub; C. Gui, On a conjecture of De Giorgi and some
related problems.\emph{Math. Ann.} 311 (1998), no. 3, 481--491.

\bibitem {HS}R. Hardt; L. Simon, Area minimizing hypersurfaces with isolated
singularities. \emph{J. Reine Angew. Math.} 362 (1985), 102--129.

\bibitem {JM}D. Jerison; R. Monneau, Towards a counter-example to a conjecture
of De Giorgi in high dimensions. \emph{Ann. Mat. Pura Appl.} (4) 183 (2004),
no. 4, 439--467.

\bibitem {ML}M. Kowalczyk; Y. Liu; F. Pacard, The classification of four-end
solutions to the Allen-Cahn equation on the plane. \emph{Anal. PDE } 6 (2013),
no. 7, 1675--1718.

\bibitem {Lin}F.H. Lin, Minimality and stability of minimal hypersurfaces in
$\mathbb{R}^{n},$ \emph{Bull. Austral. Math. Soc.} 36(1987), 209--214.

\bibitem {L}G. Lawlor, A sufficient criterion for a cone to be
area-minimizing. \emph{Mem. Amer. Math. Soc.} 91 (1991), no. 446, vi+111 pp.

\bibitem {Lawson}B. Lawson, The equivariant Plateau problem and interior
regularity, \emph{Trans. Amer. Math. Soc.} 173 (1972).

\bibitem {PW}F. Pacard; J. Wei, Stable solutions of the Allen-Cahn equation in
dimension 8 and minimal cones. \emph{J. Funct. Anal.} 264 (2013), no. 5, 1131--1167.

\bibitem {S}O. Savin, Regularity of flat level sets in phase transitions.
\emph{Ann. of Math. } (2) 169 (2009), no. 1, 41--78.

\bibitem {Simon}L. Simon, Entire solutions of the minimal surface equation,
\emph{J. Differential Geometry} 30 (1989), 643--688.

\bibitem {Simons}J. Simons, Minimal varieties in riemannian manifolds.
\emph{Ann. of Math. } (2) 88(1968), 62--105.
\end{thebibliography}
\end{document}